\documentclass[a4paper,12pt]{article}
    \usepackage[top=2cm,bottom=2cm,left=2.5cm,right=2.5cm]{geometry}
    \usepackage{amsmath, amssymb}
    \usepackage[margin=0.5cm,%
                font=small,%
                format=hang,%
                labelsep=period,%
                labelfont=bf]{caption}
\usepackage{graphicx}%
\usepackage{multirow}%
\usepackage{amsmath,amssymb,amsfonts}%
\usepackage{amsthm}%
\usepackage{mathrsfs}%
\usepackage[title]{appendix}%
\usepackage{xcolor}%
\usepackage{textcomp}%
\usepackage{manyfoot}%
\usepackage{booktabs}%
\usepackage{algorithm}%
\usepackage{algorithmicx}%
\usepackage{algpseudocode}%
\usepackage{listings}%
\usepackage{blkarray}
\usepackage{nicematrix}
\usepackage{enumitem}
\usepackage{cancel}
\usepackage{natbib}
\usepackage{fancyhdr}
\usepackage{lastpage}

\usepackage{amsmath, amssymb, booktabs, lastpage,}
\usepackage{multicol}
\usepackage{hyperref}
\usepackage{blkarray}
\usepackage{empheq} 
\usepackage{latexsym} 
\usepackage{array}
\usepackage{tikz-cd}
\usepackage{tikz}
\usetikzlibrary{arrows}
\usetikzlibrary{automata,positioning}
\usetikzlibrary{circuits.logic.US,circuits.logic.IEC,fit}

\numberwithin{table}{section}
\numberwithin{equation}{section}

\theoremstyle{plain}
\newtheorem{theorem}{Theorem}[section]

\newtheorem{definition}[theorem]{Definition}

\newtheorem{example}[theorem]{Example}

\newtheorem{remark}[theorem]{Remark}

\usepackage{tikz}
\usetikzlibrary{arrows, positioning, fit, calc}

\tikzset{
    species/.style={font=\Large\bfseries},
    inhibit/.style={red,-|,thick},
    activate/.style={red,->,thick},
    arrow/.style={-Latex,thick},
}

\renewcommand{\AA}{{\mathbf A}}
\newcommand{\NN}{{\mathcal N}}
\renewcommand{\SS}{{\mathcal S}}
\newcommand{\CC}{{\mathcal C}}
\newcommand{\RR}{{\mathcal R}}
\newcommand{\KK}{{\mathcal K}}
\newcommand{\dd}{{\rm d}}
\newcommand{\NetBiF}{{\sffamily{NetBiF}}}

\usepackage{authblk}
\author[1]{\textbf{Al Jay Lan J. Alamin}}
\author[1]{\textbf{Bryan S. Hernandez}}

\affil[1]{\small \textit{Institute of Mathematics, University of the Philippines Diliman, Quezon City 1101, Philippines}}
\affil[*]{Email addresses: \texttt{ajalamin@up.edu.ph} (corresponding author), \texttt{bshernandez@up.edu.ph}}

\title{Computational framework {\NetBiF} predicts and classifies steady-state bifurcations in biochemical systems via reaction network structure}
\date{\today}

\begin{document}
\maketitle
\begin{abstract}
    Dynamics of biochemical systems are dictated by the stoichiometry and kinetics of reaction networks. These systems may admit positive steady states that could undergo qualitative change in response to varying rate parameters. These changes are called steady-state bifurcations, and they correspond to biologically meaningful processes. Classical bifurcation analysis relies on exact kinetics and rate parameters, which are often unknown or difficult to measure for complex systems. To address this, structural bifurcation analysis (SBA) was introduced to predict bifurcations using only the system's associated reaction network. In this work, the computational package {\NetBiF} (\textbf{Net}work \textbf{Bi}furcation \textbf{F}inder) is introduced to implement SBA. {\NetBiF} is applied across diverse biochemical systems, including fundamental metabolic pathways, signaling and gene regulatory networks, and a large-scale carbon cycle model, to predict steady-state bifurcations. Crucially, for biochemical systems governed by power law kinetics, the implementation of {\NetBiF} also classifies the steady-state bifurcations as saddle-node, transcritical, or pitchfork via a structural analog of Sotomayor's theorem. To this end, explicit formulations for the bifurcation criteria were derived for these biochemical systems. Determining and classifying bifurcation behaviors provide insight into biochemical systems' abilities to adapt to changing and fluctuating conditions. Collectively, the study presents {\NetBiF} as a scalable, structure-based, and consistent computational tool to implement SBA. Our findings enable researchers to systematically study dynamical behaviors of biochemical systems through their reaction network structures without requiring explicit kinetic information.
    \\[1em]\noindent \textbf{Keywords:} chemical reaction network, biochemical systems, structural bifurcation analysis, flux-augmented matrix, steady states, biochemical pathways
\end{abstract}

\section{Introduction}\label{sec:intro}

Biochemical systems are governed by a large number of chemical reactions, which can collectively describe the molecular basis of the physiology of an organism. A central problem in mathematical and systems biology is to understand the long-term behavior of such systems. The methods of study for such systems include Chemical Reaction Network Theory (CRNT), which explores how the structure of a system's associated reaction network affects its dynamical behavior \citep{alon2019introduction,arceo2017reaction,feinberg2019crnt,voit2013BST}. In the CRNT framework, we model biochemical systems using \emph{reaction networks}, whose dynamics may then be determined by their stoichiometry and kinetics \citep{arceo2017reaction,feinberg2019crnt}. However, since most systems in biochemistry are either experimentally identified or incredibly large, the kinetics of reactions in these systems, as well as the values of the associated rate constants, are rarely known with full certainty. Hence, methods to analyze long-term behaviors of biochemical systems that minimize reliance on specific and exact kinetic parameters have also been heavily researched in recent years \citep{hernandez2023framework,hirono2025rethinking,JMP2019,Muller2012,Muller2014,okada2018sba,villejo2026}. 

One behavior being analyzed is the possibility of steady-state bifurcations in biochemical systems. Emerging methods hope to predict bifurcation solely based on the structure of the system's associated reaction network, eliminating the need to specify kinetic parameters. These methods are underlined by the theory of Structural Bifurcation Analysis (SBA) introduced by \cite{okada2018sba} and recently generalized by \cite{huang2025uncovering}. In the previous works, the method of SBA was established to identify the bifurcating species for each bifurcation parameter of biochemical systems, as well as determine the subnetworks which identify the conditions for which bifurcation can occur. Initially used to conduct sensitivity analysis \citep*{mochizuki2015sensitivity,okada2016localization,okada2017sensitivity}, the main driver for the prediction of bifurcation behavior via SBA is a square matrix constructed solely using the structure and topology of the associated reaction network, called the \emph{flux-augmented matrix} $\AA.$ The method posits an exact proportionality between the determinant of the Jacobian of the system and that of $\AA$ so that the search of zeros of the determinant of the Jacobian may instead be done on the flux-augmented matrix $\mathbf A,$ regardless of whether the system obeys any conservation laws \citep{okada2018sba,huang2025uncovering}.  

Despite these recent theoretical advances, SBA has yet to be implemented as a computational tool, and its applications remain limited to the hand-analyzed systems presented in previous papers. In this study, we address this gap and present a computational framework, which implements the method of SBA, to predict the possibility of steady-state bifurcations in different biochemical systems. In addition, for systems which are endowed with \emph{power law kinetics,} we classify the steady-state bifurcations as either saddle-node, transcritical, or pitchfork based on a structural classification method from \cite{okada2021flux}. We introduce {\NetBiF} (\textbf{Net}work \textbf{Bi}furcation \textbf{F}inder), a computational package developed in MATLAB to implement the prediction and classification of steady-state bifrucations via SBA, as well as the identification of \emph{buffering structures} and \emph{determinant structures}. 

The results of this work benefit the study of techniques in bifurcation analysis of complex biochemical systems. Indeed, by taking advantage of network structure and topology in studying bifurcation, we offer a scalable foundation for future work involving bifurcation analysis of large-scale biochemical models. Beyond this, we also contribute to the knowledge on the dynamics of the chosen biochemical systems as we discuss their bifurcation behaviors, and the type of bifurcations that their network structure admits. This study provides supporting evidence for the use of SBA in bifurcation analysis of biochemical systems, and promote further methodological development and application across many biochemical systems.

\section{Mathematics of Reaction Networks}\label{sec:MoRN}
We use Chemical Reaction Network Theory (CRNT) to define the \emph{reaction networks}, which model our biochemical systems \citep{feinberg1974dynamics,feinberg1987chemical,feinberg2019crnt,feinberg1979lectures,horn1972necessary,horn1972general}. Note that reaction networks do not necessarily permit a chemical interpretation, but it remains applicable to model our biochemical systems \citep{muller2022makes}.

As we focus on the implementation and application of SBA in this present paper, we provide the basic notions of CRNT in Appendix \ref{appendix:crnt}. To summarize, a reaction network $\NN$ and its associated kinetics $\KK$ give rise to a chemical \emph{kinetic system} $(\NN, \KK),$ which induces a system of ordinary differential equations (called \emph{species formation rate function}) describing the behavior of the concentrations of species in the system over time.

The study of kinetic systems is centered around the objective of understanding and predicting its long-term behavior. One approach is to derive the steady states of the system, and study their stability. Generally, this is quite difficult for biochemical systems due to their experimental nature, but when parameters are readily available and can be exactly determined, a framework to derive these analytic steady states has been developed \citep{hernandez2023framework}. In the same paper, they introduce a computational package to facilitate this derivation, called COMPILES (COMPuting anaLytic stEady States). Recently, a new computational framework called CRITERIA (Computing paRametrized posITive EquilibRIA) improves on COMPILES by considering a network's elementary flux modes \citep{villejo2026}. Importantly, the derivation of a closed-form parameterized steady state establishes monostationarity in a system. However, many biochemical processes rely on a system's ability to admit multiple steady states.  Multistationarity explains biological switch-like behaviors, which can dictate the fate of a cell or explain properties of gene regulation networks \citep{lai2004sonic,ozbudak2004multistability}. We wish to capture these switch-like dynamics, which mathematically arise from steady-state bifurcations. As such, the present paper introduces a new computational framework to analyze steady-state bifurcations of biochemical systems using only reaction network structure. 

\begin{example}
    \label{ex:main-example}
    Consider a reaction network consisting of three species, $X_1, X_2,$ and $X_3,$ and six reactions $R_1,\ldots, R_6$, namely \[\begin{array}{ccc}
    R_1\colon \ \emptyset\to X_1&
    R_2\colon \ X_1\to X_2&
    R_3\colon \ X_2\to X_1\\
    R_4\colon \ X_2\to X_3&
    R_5\colon \ X_3\to X_2&
    R_6\colon X_3\to \emptyset
\end{array}.\] The set of complexes in this reaction network is given by $\CC = \{\emptyset, X_1, X_2, X_3\}$ which is a subset of $\text{span}~\{X_1, X_2, X_3\}.$ Note that our reaction network induces a directed graph (digraph) $(\CC, \RR)$ where $\CC$ and $\RR = \{R_1, \ldots, R_6\},$ as defined above act as the nodes and arrows of the digraph, respectively. Hence, the network induces the digraph given by \begin{equation}
    \label{eq:network}
    \begin{tikzpicture}[baseline=(current  bounding  box.center), node distance = {15mm}]
            \node (0) {};
			\node (X1) [right of=0] {$X_1$};
			\node (X2) [right of=X1] {$X_2$};
			\node (X3) [right of=X2] {$X_3$};
            \node (1) [right of=X3] {};
            
            \draw [->] (0) -- (X1) node[midway,above]{$k_1$};
            
			\draw [-left to] ($(X1.east) + (0pt, 2pt)$) -- ($(X2.west) + (0pt, 2pt)$) node[midway,above]{$k_2$};
			\draw [-left to] ($(X2.west) + (0pt, -2pt)$) -- ($(X1.east) + (0pt, -2pt)$) node[midway,below]{$k_3$};

			\draw [-left to] ($(X2.east) + (0pt, 2pt)$) -- ($(X3.west) + (0pt, 2pt)$) node[midway,above]{$k_4$};
			\draw [-left to] ($(X3.west) + (0pt, -2pt)$) -- ($(X2.east) + (0pt, -2pt)$) node[midway,below]{$k_5$};

            \draw [->] (X3) -- (1) node[midway,above]{$k_6$};



            
	\end{tikzpicture}\end{equation}

    Following the definitions in the previous paragraphs, the stoichiometry of the system is dictated by its associated stoichiometric matrix given by\[
\nu =
\begin{bNiceArray}{cccccc}[
    first-row,
    last-col,
    margin
]
    R_1 & R_2 & R_3 & R_4 & R_5 & R_6 & {} \\
    1  & -1 &  1 &  0 &  0 &  0 & X_1 \\
    0  &  1 & -1 & -1 &  1 &  0 & X_2 \\
    0  &  0 &  0 &  1 & -1 & -1 & X_3 
\end{bNiceArray}.
\] Note that this matrix is full-ranked, that is, it is maximal, so that $\text{rank}~\nu = 3,$ which also equals the dimension of the network's stoichiometric subspace. Now, let $x_i$ be the concentration of species $X_i$ in the system for $i = 1,2, 3.$ If we endow the network with mass action kinetics, then the species formation rate function of the system is given by \[f(x;\mathbf k)=\frac{\dd}{\dd t} \begin{bmatrix}
        x_1\\x_2\\x_3
    \end{bmatrix}= \nu r(x;\mathbf k) =\begin{bmatrix}
        1&-1&1&0&0&0\\
        0&1&-1&-1&1&0\\0&0&0&1&-1&-1
    \end{bmatrix}\begin{bmatrix}
        k_1\\
        k_2x_1\\
        k_3x_2\\
        k_4x_2\\
        k_5x_3\\k_6x_3
    \end{bmatrix}.\] The steady states of the system are given by the points $(x_1^*, x_2^*, x_3^*)\in\mathbb R^3$ which satisfy
    \[f(x^*;\mathbf k)= \begin{bmatrix}
        k_1 - k_2x_1^* + k_3x_2^*\\
        k_2x_1^*-k_3x_2^* -k_4x_2^* + k_5x_3^*\\
        k_4x_2^*-k_5x_3^*-k_6x_3^*
    \end{bmatrix} = \begin{bmatrix}
        0\\0\\0
    \end{bmatrix}.\] Using COMPILES, the unique positive steady state of the system is given by \[x^* = (x_1^*, x_2^*, x_3^*)  = \left(\frac{k_1(k_3k_5 + k_3k_6 + k_4k_6)}{k_2k_4k_6}, \frac{k_1(k_5 + k_6)}{k_4k_6}, \frac{k_1}{k_6}\right).\] Having found a closed parameterized form, the (positive) steady state of the system is unique for each assignment of the rate parameters. 
\end{example}

\section{Classical vs. Structural Bifurcation Analysis}

Determining the possibility for significant and irreversible qualitative changes in long term behavior of biochemical systems is a worthwhile endeavor. Mathematically, this ability to undergo qualitative change is attributed to steady-state bifurcations. In this section, we compare two methods to establish steady-state bifurcations in dynamical systems: the Jacobian method and Structural Bifurcation Analysis (SBA). We restrict our discussion to local codimension-1 steady-state bifurcations where only one parameter near a steady state is varied.

\subsection{Classical Bifurcation Analysis}

Let $(\NN, \KK)$ be a kinetic system such that each reaction $R_i\in\RR$ in the system has an associated rate parameter $k_i >0.$ The vector of rate parameters in this system is given by $\mathbf k\in \mathbb R^N_{>0}$ where $N$ is the number of reactions in $\NN.$ Now, let $f(x;\mathbf k)$ be the species formation rate function of the system, and $x^*$ be a steady state. 

\begin{definition}
        The \emph{Jacobian of $f(x;\mathbf k)$} at $x = x^*$ is defined as $\nabla f({x^*};{\mathbf k}) = \nu\dfrac{\partial r(x; \mathbf k)}{\partial x}{\bigg |}_{x = x^*},$ where $\nu$ is the stoichiometric matrix of the network and $r(x;\mathbf k)$ is the vector of rate functions assigned to each reaction in the system describing its kinetics.  Moreover, let $\{\sigma_i\}_{i=1}^M$ be the eigenvalues of $\nabla f(x^*;\mathbf k)$ ordered by decreasing real parts ($\operatorname{Re} \sigma_1 \geq \operatorname{Re} \sigma_2\ge \ldots \geq \operatorname{Re}\sigma_M$).
        \end{definition}

If the real parts of all eigenvalues are negative, then we say that $x^*$ is \emph{(asymptotically) stable}. If at least one of the real parts of the eigenvalues takes a positive value, then $x^*$ is \emph{unstable}. Now, the steady state $x^*$ is said to undergo a \emph{bifurcation} when it changes stability as a rate parameter varies through a critical threshold. Since the eigenvalues $\{\sigma_i\}_{i=1}^M$ are ordered by decreasing real parts, a sufficient condition for the instability of $x^*$ as a steady state is if $\operatorname{Re} \sigma_1 > 0.$ It follows then that $x^*$ undergoes bifurcation whenever $\operatorname{Re} \sigma_1$ changes sign. For example, such a sign change when $\sigma_1$ is purely real indicates a steady-state bifurcation leading to a saddle-node, transcritical, pitchfork, or higher-order bifurcation \citep{strogatz2018nonlinear}. 

Since the determinant of the Jacobian is equal to the product of its eigenvalues, if \(\det\nabla f(x^*;\overline{\mathbf k}) =\sigma_1\sigma_2\cdots\sigma_M = \prod_{i=1}^M\sigma_i = 0\) then $x^*$ undergoes a bifurcation as $\mathbf k$ varies through $\mathbf{\overline k}.$ Therefore, a system undergoes a steady-state bifurcation as the bifurcation parameter varies through its threshold whenever the determinant of the Jacobian vanishes.

\begin{example}
    \label{ex:network-modified}
    Suppose the reaction network in \eqref{eq:network} is endowed with mass action kinetics and then modified such that $X_2$ inhibits its own production, and  $X_1$ activates the reaction $X_3\to X_2.$ Then, the rate functions for reactions $R_2$ and $R_5$ becomes \[r_2(x; k_2) = k_2x_1\frac{K_2^m}{K_2^m + (x_2)^m},\qquad r_5(x;k_3) = k_5x_3\frac{(x_1)^n}{K_3^n + (x_1)^n},\]
    where $m, n\in\mathbb N$ are the Hill coefficients, and $K_2, K_3$ are the associated dissociation constants for the regulatory reactions, respectively. Graphically, we can represent these new regulatory reactions by either a bar arrow ($\dashv$) or a regular arrow ($\to$) to denote an inhibition or activation regulation, respectively. Thus, we obtain the following reaction diagram for this modified system:
    \begin{equation}
    \label{eq:network-modified}\begin{tikzpicture}[baseline=(current  bounding  box.center), node distance = {15mm}]
            \node (0) {};
			\node (X1) [right of=0] {$X_1$};
			\node (X2) [right of=X1] {$X_2$};
			\node (X3) [right of=X2] {$X_3$};
            \node (1) [right of=X3] {};
            
            \draw [->] (0) -- (X1) node[midway,above]{$k_1$};
            
			\draw [-left to] ($(X1.east) + (0pt, 2pt)$) -- ($(X2.west) + (0pt, 2pt)$) node[midway,above]{$k_2$};
			\draw [-left to] ($(X2.west) + (0pt, -2pt)$) -- ($(X1.east) + (0pt, -2pt)$) node[midway,below]{$k_3$};

			\draw [-left to] ($(X2.east) + (0pt, 2pt)$) -- ($(X3.west) + (0pt, 2pt)$) node[midway,above]{$k_4$};
			\draw [-left to] ($(X3.west) + (0pt, -2pt)$) -- ($(X2.east) + (0pt, -2pt)$) node[midway,below]{$k_5$};

            \draw [->] (X3) -- (1) node[midway,above]{$k_6$};

            \draw[inhibit] (X2.north) 
    |- ($(X1)!0.5!(X2) + (0,30pt)$) 
    -- ($(X1)!0.5!(X2) + (0, 20pt)$)
    node[pos=0.25,left,red] {};

    \draw[activate] (X1.south) 
    |- ($(X2)!0.5!(X3) + (0,-30pt)$) 
    -- ($(X2)!0.5!(X3) + (0, -20pt)$)
    node[pos=1,left,red] {};

	\end{tikzpicture}.\end{equation}
    Solving for the determinant of the Jacobian in this case is quite difficult and tedious. Hence, we resort to numerical simulations to exhibit the system's bifurcation behavior, if any. Through numerical simulations, we find that an appropriate choice of parameters and rate functions for the regulatory reactions will result in a bifurcation phenomenon with $k_5$ being the bifurcation parameter. Figure \ref{fig:numerical_sim} shows the results of the numerical simulation where we see that a continuous change in the value of $k_5$ results in a qualitative change in the steady-state behavior of species $X_1$ and $X_2,$ but not in $X_3.$ 

    \begin{figure}
        \centering
        \includegraphics[width=\linewidth]{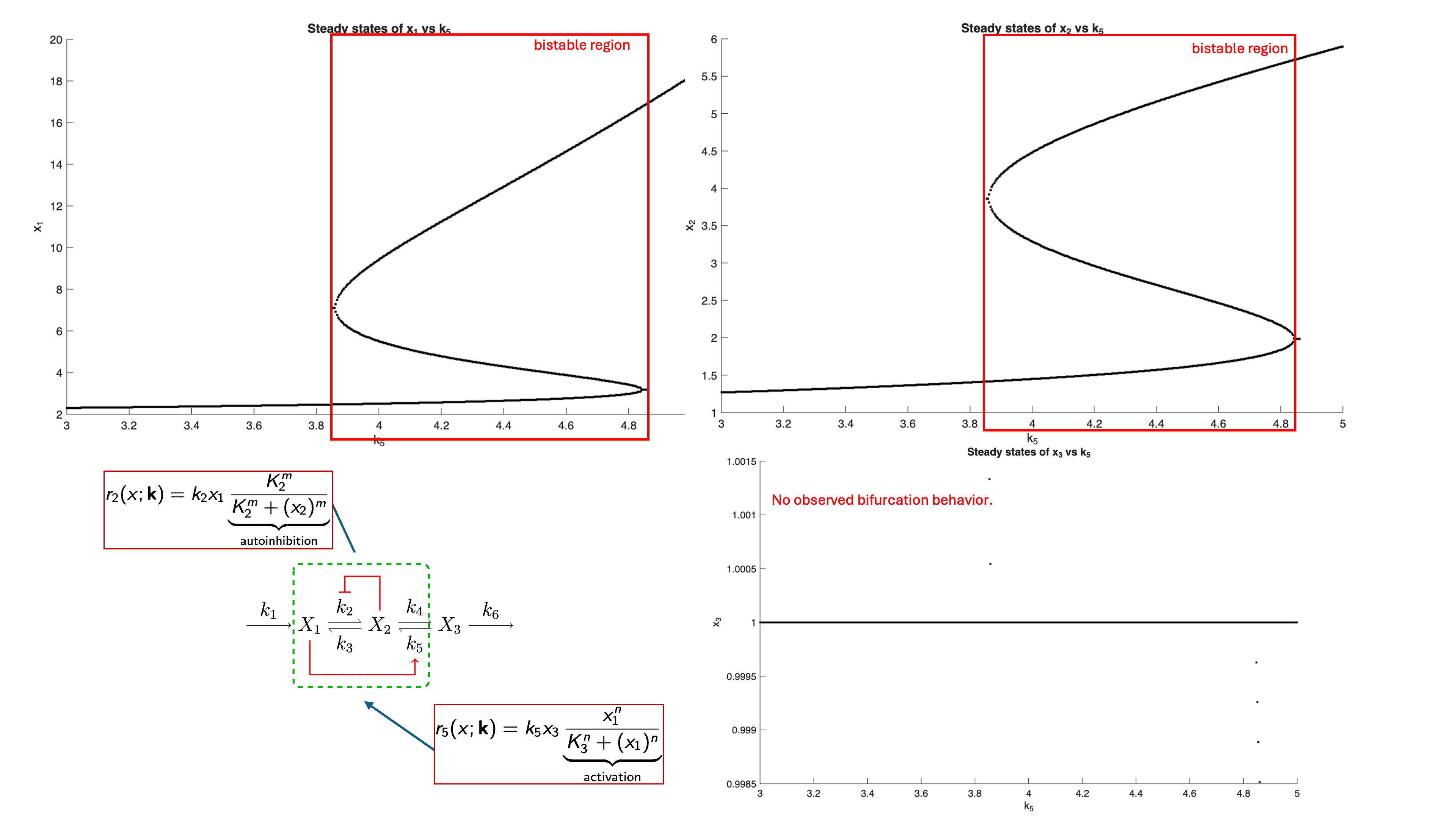}
        \caption{Numerical simulations for the steady-state concentrations of $X_1, X_2,$ and $X_3$ in \eqref{eq:network-modified}, with $k_5$ as bifurcation parameter. Saddle-node bifurcations were observed for $X_1$ and $X_2$ resulting in a bistable region. No bifurcation was observed for $X_3$ in the simulations. Sigmoidal repression and activation functions were chosen as such are sufficient to guarantee bistability in biochemical systems \citep{angeli2004detection}.}
        \label{fig:numerical_sim}
    \end{figure}
\end{example}


The main difficulty in the classical analysis of bifurcation is its requirement of exact and precise parameter values, which are difficult to obtain, if not unavailable, for most biochemical systems. Moreover, when conserved quantities are present in the system, the Jacobian can become singular and the classical method becomes ineffective in predicting steady-state bifurcations. These difficulties are circumvented by Structural Bifurcation Analysis (SBA). Parameter dependence in classical bifurcation analysis is replaced with structural and topological dependence only in SBA, and by the recent generalization of \cite{huang2025uncovering}, the method of SBA also becomes applicable for systems with conserved quantities  \citep{okada2016localization,okada2017sensitivity,okada2018sba,okada2021flux}. 

\subsection{Structural Bifurcation Analysis}
We begin our discussion of SBA with a definition of the \emph{flux-augmented matrix.}

\begin{definition}
\label{def:flux-aug}
    Let $(\NN, \KK)$ be a kinetic system with $M$ species and $N$ reactions, where $\nu$ is the $M\times N$ stoichiometric matrix of $\NN$ and $r =(r_1, r_2, \ldots, r_N)^\top$ is the vector of rate functions of $\KK.$ The \emph{flux-augmented matrix $\AA$} is a square matrix defined by \[\AA:=\begin{pmatrix}
        \dfrac{\partial r}{\partial x} & C\vspace{0.25cm}\\
        D^\top&0
    \end{pmatrix}\] where $C$  and $D$ are matrices whose columns form a basis for $\ker \nu$ and $\ker\nu^\top,$ respectively.
\end{definition}

Structurally, the flux-augmented matrix encodes the influences of reaction fluxes on the species and is given by the $\partial\mathbf r/\partial\mathbf x$ block. Moreover, the closed paths/loops of reactions in the network is given by a (right) kernel basis of the stoichiometric matrix denoted by $C.$ Finally, the $D^\top$ submatrix of $\AA$ encodes the existence of conserved quantities in the system.

Initially used to conduct sensitivity analysis, the main driver for the prediction of bifurcations via SBA is the flux-augmented matrix $\AA$ \citep{mochizuki2015sensitivity,okada2016localization,okada2017sensitivity}. It was shown in \cite{okada2018sba} that the determinant of the Jacobian induced by a system is proportional to the determinant of $\AA.$ Thus, the search for zeros of the determinant of the Jacobian may instead be done on the flux-augmented matrix $\mathbf A.$ In the original papers of SBA, the assumption is that the system does not admit any conserved quantities \citep{okada2018sba,okada2021flux}. Equivalently, this also means that there is no conservation law, which governs the system. Through \cite{huang2025uncovering}, the method of SBA is generalized to handle networks with conservation laws widening its applicability. The main theorem of \cite{huang2025uncovering} posits that even in the presence of conserved quantities, the determinant of $\AA$ is proportional to the Jacobian by some mapping so that identification behavior is still predicted by the zeros of the determinant of $\AA.$

\begin{example}
    \label{ex:example-sba}
    We have shown that the kinetic system in Example \ref{ex:main-example} is monostationary under mass action kinetics. Now, using the definition of the flux-augmented matrix on the reaction network representation of the system, we have \[
\AA = 
\begin{bNiceArray}{cccccc}[
    first-row,
    last-col,
    margin
]
    X_1 & X_2 & X_3 & C_1 & C_2 & C_3 & {} \\
    0 & 0 & 0 & 0 & 0 & 1 & R_1\\
    r_{2,1} & 0 & 0 & 1 & 0 & 1 & R_2\\
    0 & r_{3,2} & 0 & 1 & 0 & 0 & R_3\\
    0 & r_{4, 2} & 0 & 0 & 1 & 1 & R_4\\
    0 & 0 & r_{5,3} & 0 & 1 & 0 & R_5\\
    0 & 0 & r_{6,3} & 0 & 0 & 1 & R_6
\end{bNiceArray}
\] where the first three columns denote ${\partial r_i}/{\partial x_j} = r_{i,j}$ where $r_{i,j} \neq 0$ if species $X_j$ regulates reaction $R_i.$ In addition, we will have $r_{i,j} > 0$ if the regulation is positive, and $r_{i,j} < 0$ if negative. The last three columns represent a kernel basis of $\nu$ and encode the loops in the system. The system does not admit any conserved quantities so that $D = \{\mathbf 0\}.$ 

Solving for the determinant of $\AA$ gives $\det \AA = r_{2,1}r_{4,2}r_{6,3} \neq0$ since $r_{i,j} \neq 0,$ by definition. This means that $\det \AA\ne 0$ for any set of rate parameters, and thus will the Jacobian of the system. Therefore, any system with the associated reaction network above cannot admit any steady-state bifurcations. This supports our findings in Example \ref{ex:main-example}. Indeed, the uniqueness of the positive steady state for each assignment of the rate parameters rules out the possibility of any steady-state bifurcation behavior, as predicted here using SBA.
\end{example}

In the previous example, the determinant of $\AA$ was computed directly through cofactor expansion. We present a localization principle that arises from an appropriate reordering of rows and columns of $\AA$ forming a block matrix. This allows us to determine bifurcation behavior more efficiently as we take advantage of the independence of each substructure.

\subsection{Law of Localization and Buffering Structures}

Introduced by \cite{okada2016localization,okada2017sensitivity}, we state the \textit{law of localization} and the concept of \textit{buffering structures} to describe how perturbations on some parameters of a reaction network may sometimes be restricted to some parts of the network only.

\begin{definition}[Buffering Structures] \label{def:buffering-structure} A subnetwork $\Gamma = (\Gamma_\SS,\Gamma_\CC,\Gamma_\RR)$ of a reaction network is a \emph{buffering structure} if 
\begin{enumerate}
    \item for any $r\in \Gamma_\RR,$ we have $\partial r/ \partial x\neq 0$ for some $x\in \Gamma_\SS,$ and
    \item the \emph{index} given by \(\chi = M - N+\dim(\ker \nu) + \dim(\ker \nu^\top)\) is zero.
\end{enumerate}
\end{definition}

\begin{theorem}[Law of Localization, \cite{okada2016localization}] \label{thm:localization} If $\Gamma$ is a buffering structure, then any perturbation of reactions in $\Gamma$ does not affect the concentrations and fluxes outside of $\Gamma.$
\end{theorem}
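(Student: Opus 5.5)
The plan is to recast the response of the steady state to a parameter perturbation as a linear system governed by the flux-augmented matrix $\AA$ of Definition \ref{def:flux-aug}. Then we show that, after a suitable reordering of rows and columns, $\AA$ is block triangular, with one diagonal block corresponding to $\Gamma$. This triangular structure is exactly what confines the response to $\Gamma$.

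\textbf{Step 1: the sensitivity equation.} Take a steady state $x^*(\mathbf k)$ and differentiate $\nu r(x^*(\mathbf k);\mathbf k)=0$ with respect to a parameter $k_j$.
\begin{itemize}
\item The flux response $\partial r/\partial k_j + (\partial r/\partial x)(\partial x^*/\partial k_j)$ must lie in $\ker\nu$. So it equals $-C\mu_j$ for some coefficient vector $\mu_j$.
\item When conserved quantities are present, the perturbation is taken at fixed conserved totals. This gives $D^\top \partial x^*/\partial k_j=0$.
\end{itemize}
Stacking these conditions gives
\[
\AA\begin{pmatrix}\partial x^*/\partial k_j\\ \mu_j\end{pmatrix} = -\begin{pmatrix}\partial r/\partial k_j\\ 0\end{pmatrix}.
\]
By the proportionality of $\det\AA$ and the (reduced) Jacobian cited above, $\AA$ is invertible at a nondegenerate steady state. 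Hence the sensitivities are the columns of $-\AA^{-1}$. A perturbation of a reaction $r_j\in\Gamma_\RR$ affects only the $j$-th column of the right-hand side.

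\textbf{Step 2: reorder $\AA$.} Order the columns as follows: first $\Gamma_\SS$ together with the cycle vectors in $C$ supported in $\Gamma_\RR$, then the rest. Order the rows as: first $\Gamma_\RR$ together with the conservation-law rows associated with $\Gamma$, then the rest.

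\textbf{Step 3: show the off-diagonal block vanishes.} The block with rows outside $\Gamma$ and columns from $\Gamma$ should be zero, for three reasons.
\begin{itemize}
\item A reaction outside $\Gamma_\RR$ is not regulated by species in $\Gamma_\SS$. This is where condition 1 of Definition \ref{def:buffering-structure} enters, read as output-completeness: every reaction depending on a species of $\Gamma$ lies in $\Gamma$.
\item Cycles supported in $\Gamma_\RR$ have zero entries on reactions outside $\Gamma$.
\item The conservation rows can be chosen compatibly in the same way.
\end{itemize}

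\textbf{Step 4: the index makes the $\Gamma$ block square.} This is the step I expect to be the main obstacle. We must choose bases of $\ker\nu$ and $\ker\nu^\top$ that split into a part adapted to $\Gamma$ and a complement. We must then verify that the number of rows of the $\Gamma$ block, $|\Gamma_\RR|$ plus the number of conservation rows, equals its number of columns, $|\Gamma_\SS|$ plus $\dim$ of the cycles in $\Gamma$.

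This equality is exactly $\chi=0$, where $\dim\ker$ is interpreted as
\begin{itemize}
\item the number of cycles supported in $\Gamma$, and
\item the number of conservation laws of the full network that are not detectable in $\Gamma$ (equivalently, the cokernel of the restricted stoichiometric map),
\end{itemize}
as in \cite{okada2016localization}. Some care is needed to match the paper's $\dim\ker\nu^\top$ with this corrected cokernel count. I would handle it by a rank-nullity bookkeeping on the restricted map $\nu|_{\Gamma_\RR}$ projected onto $\Gamma_\SS$.

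\textbf{Step 5: conclude.} With a square block upper-triangular $\AA=\begin{pmatrix}A_{11}&A_{12}\\0&A_{22}\end{pmatrix}$ and $\det\AA=\det A_{11}\det A_{22}\neq0$, the inverse is also block upper triangular. The right-hand side for a perturbation in $\Gamma_\RR$ is supported in the first row block. Therefore the solution is supported in the first column block.

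This means:
\begin{itemize}
\item $\partial x^*/\partial k_j=0$ for species outside $\Gamma_\SS$;
\item $\mu_j$ has zero coefficients on cycles outside $\Gamma$.
\end{itemize}
Since the flux response equals $-C\mu_j$, and the remaining cycles are supported in $\Gamma_\RR$, the fluxes outside $\Gamma_\RR$ are also unchanged. This is the Law of Localization.
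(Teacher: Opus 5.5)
Your architecture is the one the paper relies on. The paper gives no proof of its own: it cites Okada--Mochizuki and only records that $\AA$ can be permuted to $\begin{bmatrix}\AA_\Gamma&\ast\\0&\AA_{\Gamma'}\end{bmatrix}$, which is what your Steps 1--3 and 5 construct and use. Those steps are sound. You are also right that condition 1 must be read as output-completeness, since the literal condition in the paper does not produce the off-diagonal zeros. The genuine gap is in Step 4: you misidentify the conservation-law count, and the repair you propose cannot work.

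Your own Step 3 already determines the count. The conservation rows that can sit outside the $\Gamma$ row block are exactly those $d\in\ker\nu^\top$ with $d|_{\Gamma_\SS}=0$. Hence the $\Gamma$ block must contain $\dim P_{\Gamma_\SS}(\ker\nu^\top)=\dim\ker\nu^\top-\dim\{d\in\ker\nu^\top: d|_{\Gamma_\SS}=0\}$ conservation rows, where $P_{\Gamma_\SS}$ is the coordinate projection onto $\Gamma_\SS$. It contains $\dim\{c\in\ker\nu:\operatorname{supp}c\subseteq\Gamma_\RR\}$ cycle columns. Squareness is therefore
\[
|\Gamma_\SS|-|\Gamma_\RR|+\dim\{c\in\ker\nu:\operatorname{supp}c\subseteq\Gamma_\RR\}-\dim P_{\Gamma_\SS}(\ker\nu^\top)=0,
\]
with a \emph{minus} sign on the conservation term. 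This is the Okada--Mochizuki index, and once it is taken as the definition, Step 4 is immediate.

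Neither of your two readings gives this quantity, and they are not equivalent to each other.
\begin{itemize}
\item Take the closed mass-action network $A\rightleftarrows B$ and the output-complete $\Gamma=(\{A\},\{A\to B\})$. The restricted matrix $\nu_{\Gamma_\SS,\Gamma_\RR}=(-1)$ has trivial cokernel, so your ``cokernel'' index is $1-1+0-0=0$. Yet $B^*=k_1T/(k_1+k_2)$, with $T$ the conserved total, moves with $k_1$. The correct count is $1$, so the index is $-1$; indeed the $\Gamma$ block (rows $R_1$ and the row of $A+B$, column $A$) is $2\times 1$.
\item Take $\Gamma$ to be the whole network. Every conservation law is visible in $\Gamma$, so the ``not detectable'' count is $0$ and the index comes out $2-2+1-0=1$ instead of $0$.
\end{itemize}

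Rank--nullity on $\nu_\Gamma:=\nu_{\Gamma_\SS,\Gamma_\RR}$ cannot rescue this. It only returns the identity $|\Gamma_\SS|-|\Gamma_\RR|+\dim\ker\nu_\Gamma-\dim\operatorname{coker}\nu_\Gamma=0$, which carries no condition. Moreover, the relevant count depends on the full $\ker\nu^\top$, not on $\nu_\Gamma$.

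Nor can the paper's $+\dim\ker\nu^\top$ be matched, because conservation laws add rows to the $\Gamma$ block while cycles add columns. The fix is to state the index in the form displayed above; with that, your argument is complete.
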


Buffering structures provide a type of network decomposition, which allows us to factor $\det \AA.$ Indeed, from the proof of the Law of Localization from \cite{okada2016localization}, if $\Gamma$ is a buffering structure, then 
the rows and columns of $\AA$ may be permuted by permutation matrices $P_r$ and $P_c$ so that we have \[P_r\AA P_c=\begin{bmatrix}
    \AA_{\Gamma}&\ast\\
    0&\AA_{\Gamma'}
\end{bmatrix}\] where $\Gamma'$ is the complement of $\Gamma.$ The determinant of the above matrix is the product $\det\AA_\Gamma \cdot \det\AA_{\Gamma'}$ where the determinant of $\AA_\Gamma'$ is independent of the rate constants in $\Gamma.$ Then, the determinant of $\AA$ vanishes if and only if the determinant of the above matrix also vanishes. Specifically, we have $\det\AA = 0$ if and only if $\det \AA_\Gamma = 0$ or $\det\AA_{\Gamma'} = 0.$ As an example, the flux-augmented matrix $\AA$ in Example \ref{ex:example-sba} may be permuted so that we have
\[
P_r\AA P_c = 
\begin{bNiceArray}{c|cc|ccc}[
    first-row,
    last-col,
    margin
]
    X_1 & X_2 & C_1 & X_3 & C_2 & C_3 & {} \\
    r_{2,1} & 0 & 1 & 0 & 0 & 1 & R_2\\ \hline
    0 & r_{3,2} & 1 & 0 & 0 & 0 & R_3\\
    0 & r_{4,2} & 0 & 0 & 1 & 1 & R_4\\ \hline
    0 & 0 & 0 & r_{5,3} & 1 & 0 & R_5\\
    0 & 0 & 0 & r_{6,3} & 0 & 1 & R_6\\
    0 & 0 & 0 & 0 & 0 & 1 & R_1
\end{bNiceArray}.
\] This gives $\det(P_r\AA P_c) = r_{2,1}r_{4,2}r_{6,3} = \det \AA.$ We arrive at the same conclusion about the structural inability of the system to exhibit steady-state bifurcations, but the calculation of the determinant becomes much easier. The order of the rows and columns in the permuted matrix is not random as it is determined by the buffering structures that the network structure induces.

Now, if we assume that our kinetic system admits a stable steady state and its associated flux-augmented matrix $\AA$ is invertible, then the buffering structures of its reaction network are closed under unions and intersections \cite[Corollary 1]{hirono2021structural}. Consequently, the search for all buffering structures is reduced to finding such structures with minimal elements, since all larger buffering structures can then be constructed through unions or intersections. 

This allows us to efficiently isolate smaller subnetworks to check for bifurcation behavior. However, while these minimal buffering structures can localize these dynamics, the vanishing of $\det \AA$ often requires a finer decomposition. Although bifurcation occurs when the determinant of the flux-augmented submatrix corresponding to a buffering structure becomes zero, the fundamental structures which establish this are \emph{determinant structures,} defined as buffering structures with inner buffering structures removed \citep{huang2025uncovering}. Determinant structures need not be buffering structures, but buffering structures are always a union of some determinant structures. As a result, whenever a bifurcation condition is observed from a determinant structure, the method of SBA will conclude that the bifurcation is possible in the minimal buffering structure containing the determinant structure \citep[Appendix C]{okada2018sba}.

These results concerning the decomposition of a reaction network into its buffering structures allow us to study bifurcation behaviors in large biochemical systems. The law of localization and the concept of buffering structures give us a modular approach to structural bifurcation analysis by breaking down large and complex reaction networks into smaller independent ones. As such, we can analyze the independent contribution of each buffering structure to understand the system's overall bifurcation behavior.

\subsection{Classification of steady-state bifurcations in power law systems via reaction network structure} \label{sec:classify}

Beyond identifying the existence of bifurcating species and associated bifurcation parameters, it is biologically significant to determine the type of bifurcation a species exhibits at its bifurcation point. Since we consider only local steady-state bifurcations in this study, one can use the lemma by \cite{sotomayor1973structural}, as discussed in \cite[Ch.~4.2, Theorem 1]{perko2013differential}, to classify the type of bifurcation a system exhibits using its Jacobian. Having established the correspondence between the eigenvalues of the flux-augmented matrix and the system Jacobian, one can reformulate the sufficient conditions of Sotomayor using information from network structure and topology. Indeed, \cite{okada2021flux} presented analogous conditions to determine the type of bifurcation a system undergoes. 

Consider a dynamical system $f(x;\mathbf k)$ induced by a kinetic system, and suppose that it admits a steady state $x^*$ at the parameter $\mathbf k = \overline{\mathbf k}$. Let $\AA$ be the associated flux-augmented matrix evaluated in this steady state. Suppose $\AA$ has a simple zero eigenvalue with corresponding left and right eigenvectors $W$ and $U,$ respectively. We partition the right eigenvector as $U = [u^\top, v^\top]^\top,$ where $u$ represents the first $M$ components of $U$ and $v$ represents the remaining components. To facilitate the classification of steady-state bifurcations, define the following quantities involving the flux-augmented matrix and its derivatives:
\begin{description}[font=\normalfont]
    \item[(A0)] \(W_n\frac{\partial r_n}{\partial k_n}(x^*; \overline{\mathbf k}),\)
    \item[(A0*)] \(\frac{\partial r_n}{\partial k_n}(x^*; \overline{\mathbf k}),\)
    \item[(A1)] \(W_n\left[\nabla_x\left(\frac{\partial r_n}{\partial k_n}(x^*;\overline{\mathbf k})\right)\cdot u\right],\)
    \item [(H)] \(W^\top\nabla_x^2r(x^*;\overline{\mathbf k})(u,u):=W^\top\left[\sum_{m,m'}\frac{\partial^2 r(x^*; \overline{\mathbf k})}{\partial x_m\partial x_{m'}}u_mu_{m'}\right],\)
     \item [(T3)] \(W^\top\nabla_x^3r(x^*;\overline{\mathbf k})(u,u, u) := W^\top\left[\sum_{m, m', m''}\frac{\partial^3 r(x^*;\overline{\mathbf k})}{\partial x_m\partial x_{m'}\partial x_{m''}}u_mu_{m'}u_{m''}\right].\) 
\end{description}
\cite{okada2021flux} established that the capacity of the steady state $x^*$ to undergo specific types of steady-state bifurcations is determined by which of these defined quantities vanish. The proof of these results is a direct implication of the structural relation $f = \nu r$ and the decomposition $W = \nu^\top w$ of the left eigenvector of $\AA.$ The resulting classification criteria are summarized in Table \ref{tab:okada}.

\begin{table*}[h]
\centering
\caption{Summary of Okada et al.'s conditions for bifurcation classification 
via the flux-augmented matrix $\AA$. It is assumed that $\AA$ has a simple zero eigenvalue. Each of these conditions implies 
the corresponding Sotomayor condition \citep{sotomayor1973structural,okada2021flux}.}
\begin{tabular}{lllll}
\hline
\textbf{Cond.} & \textbf{Description} & \textbf{Saddle-Node} & \textbf{Transcritical} & \textbf{Pitchfork} \\
\hline
(A0)  & $W_n\frac{\partial r_n}{\partial k_n}$                   & $\neq 0$ & ---      & ---      \\
(A0*) & ${\partial r_n}/{\partial k_n}$                         & ---      & $= 0$    & $= 0$    \\
(A1)  & $W_n[\nabla_x(\frac{\partial r_n}{\partial k_n})\cdot u]$    & ---      & $\neq 0$ & $\neq 0$ \\
(H)  & $W^\top\nabla_x^2 r(u,u)$                                       & $\neq 0$ & $\neq 0$ & $= 0$    \\
(T3)  & $W^\top\nabla_x^3 r(u,u,u)$                                     & ---      & ---      & $\neq 0$ \\
\hline
\end{tabular}
\label{tab:okada}
\end{table*}

The above result of \cite{okada2021flux} is applicable to any system regardless of kinetics. However, implementing them computationally is challenging when rate functions remain arbitrary. Thus, we specify the classification criteria for the case of systems endowed with power law kinetics and derive computationally tractable formulations.

\begin{theorem}
    \label{thm:bif_power_law}
    Let $\Gamma$ be the minimal buffering structure containing the bifurcation condition at steady state $x^*$ and bifurcation parameter $\overline{\mathbf k}$, and let $\AA_{\Gamma}$ be the restriction of the flux-augmented matrix to $\Gamma.$ Assume $A_{\Gamma}$ has a simple zero eigenvalue, with left and right eigenvectors $W$ and $U = [u^\top, v^\top],$ respectively.

    If the system is endowed with power-law kinetics such that the rate function for each reaction $R_i\in\RR$ is $r_i(x) = k_i\prod_{j=1}^M x_j^{F_{ij}}$ with $F_{ij}$ being the kinetic order value of species $X_j$ in reaction $R_i,$ then the bifurcation classification quantities at steady state $x = x^*$ take the following forms:
    \begin{description}
    \item[(1) Net Sensitivity ($\Theta$):] 
    \[ \Theta = W^\top \frac{\partial r}{\partial k_n}{\bigg |}_{x = x^*}  = W_n\prod_{\ell = 1}^M (x_\ell^*)^{F_{n\ell}} \] Note that for positive steady states, $\Theta = 0$ if and only if $W_n =0 .$
    
    \item[(2)] Parameter Sensitivity Gradient for reaction $R_n$ (A1): 
    \[ W_n\left[\nabla_x\left(\frac{\partial r_n}{\partial k_n}\right)\cdot u\right] = W_n\frac{\partial r_n}{\partial k_n}{\bigg |}_{x=x^*}\cdot F_n^\top \tilde u,\qquad \tilde u = (u_i/x_i^*)_{i=1}^M\]
    
    \item[(3) Hessian Contraction (H):]
    \[ W^\top\nabla^2_x r(x^*;\mathbf{\overline k})(u,u) = \sum_{i} W_i r_i(x^*)\left((F_i^\top\cdot \tilde u)^2 - \sum_jF_{ij}\tilde u_j^2\right) \]
    
    \item[(4)] Third-Order Contraction (T3):
    \[ W^\top \nabla_x^3 r(x^*;\overline{\mathbf k})(u,u,u) = \sum_{i} W_i r_i(x^*)\left[(F_i^\top\cdot \tilde u)^3 - 3(F_i^\top\cdot \tilde u)\sum_jF_{ij}\tilde u_j^2 + 2\sum_jF_{ij}\tilde u_j^3\right] \]
\end{description}

\begin{proof}
    See Appendix \ref{appendix:proof-thm-2}.
\end{proof}
\end{theorem}

Theorem \ref{thm:bif_power_law} essentially streamlines the classification process for systems with power law kinetics, and allow us to have a computationally tractable formulation of the bifurcation classification conditions, as presented in Table \ref{tab:okada}.

\begin{remark} For the case of power law kinetics and in general, systems where the dependence of the rate function on the rate parameters is strictly monotonic, the net sensitivity parameter is nonzero for positive steady states. As such, we can exclude the possibility of transcritical and pitchfork bifurcations whenever the bifurcation occurs at a positive steady state. We restrict the occurrence of transcritical and pitchfork bifurcations to boundary steady states or at the origin. Thus, following Theorem \ref{thm:bif_power_law}, the value of (A1) is relevant only when $\Theta = 0$ to determine whether the system is structurally capable of exhibiting transcritical or pitchfork steady-state bifurcations. If the value corresponding to (A1) vanishes, then the theorem fails to classify the bifurcation. \end{remark}

\section{Implementation and Application of SBA to Biochemical Systems}
In this section, we introduce the computational package which implements the method of SBA and present its applications to different biochemical systems. The package, called {\NetBiF} (\textbf{Net}work \textbf{Bi}furcation \textbf{F}inder) (see https://github.com/ajlalamin/NetBiF), mainly uses the structure of the system's associated reaction network to determine the bifurcation candidates and associated bifurcation parameters of a kinetic system. The code is written in MATLAB and uses the input syntax of COMPILES \citep{hernandez2023framework}, a separate package which derives the analytic steady states of biochemical systems. At the highest level, the algorithm of {\NetBiF} takes a reaction network, along with its stoichiometric and kinetic matrices, generates a comprehensive bifurcation summary. This output gives the bifurcating species and parameters, as well as the minimal labelled buffering structure localizing the behavior. 

The algorithm for {\NetBiF} implements SBA through a procedural six-step pipeline. It uses both numeric and symbolic approaches to detect, filter, and output unique steady-state bifurcation candidates alongside the subnetwork responsible for such behavior. Using the user-defined reaction networks, whose syntax follows that of COMPILES \citep{hernandez2023framework}, stoichiometric and kinetic-order matrices are constructed. These matrices are used to construct the sign-preserving, randomized flux-augmented matrix $\mathbf A.$ Then, the package enumerates the buffering structures and applies a nested decomposition technique to determine the fundamental determinant structures. After the enumeration of these substructures, the numerical flux-augmented matrix is lifted into a symbolic representation where symbolic determinants are computed for each determinant structure. Finally, {\NetBiF} systematically filters and isolates conditions that can vanish to zero, thereby allowing for the detection of steady-state bifurcation candidates, as underlined by SBA. Figure \ref{fig:netbif-details} shows the details of this implementation of SBA via \NetBiF.

\begin{figure}
    \centering
    \includegraphics[width=\linewidth]{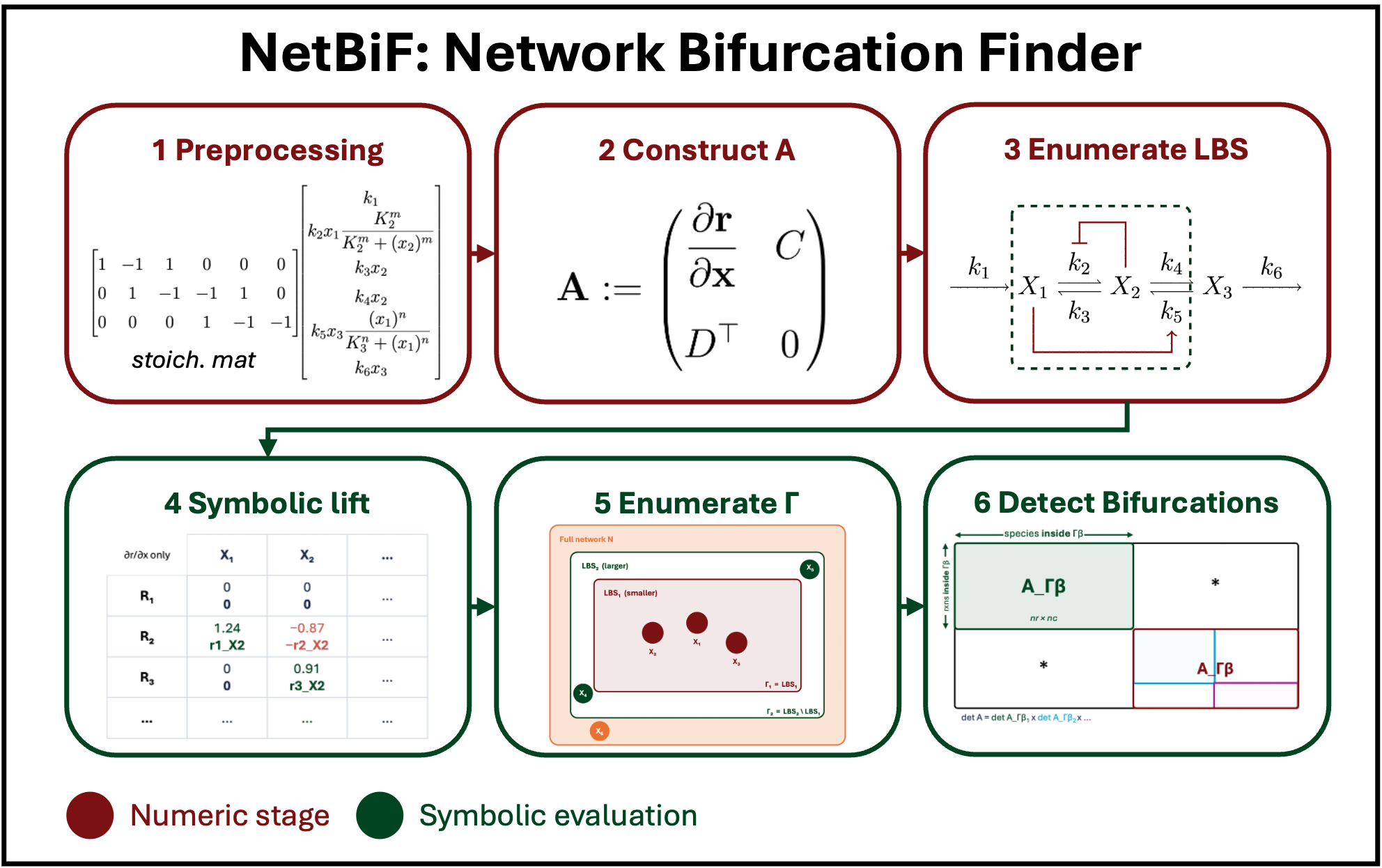}
    \caption[Details of the {\NetBiF} algorithm.]{{\textit{Details of the {\NetBiF} algorithm.} The six stages of the algorithm move between numeric (in maroon) and symbolic (in green) computations. (1) The network is parsed into its stoichiometric and kinetic-order matrices, (2) both of which are used to numerically construct the flux-augmented matrix $\AA$; (3) labelled buffering structures (LBS) are enumerated and sorted by size and (4) $\AA$ is lifted to a symbolic matrix while ensuring that the sign structure is preserved; finally, (5) LBS are peeled into determinant structures and (6) the determinant of corresponding submatrices are symbolically computed and factored to detect bifurcation candidates.}}
    \label{fig:netbif-details}
\end{figure}

Beyond determining the existence of steady-state bifurcations, \NetBiF classifies the predicted bifurcation as either saddle-node, transcritical, or pitchfork, for biochemical systems with power law kinetics. We use the classification procedure from \cite{okada2021flux}, and the derived quantities discussed in Section \ref{sec:classify} of this paper. Figure \ref{fig:classify_flowchart} shows the decision tree for the implementation of the structural classification of steady-state bifurcations.

\begin{figure}
    \centering
    \includegraphics[width=\linewidth]{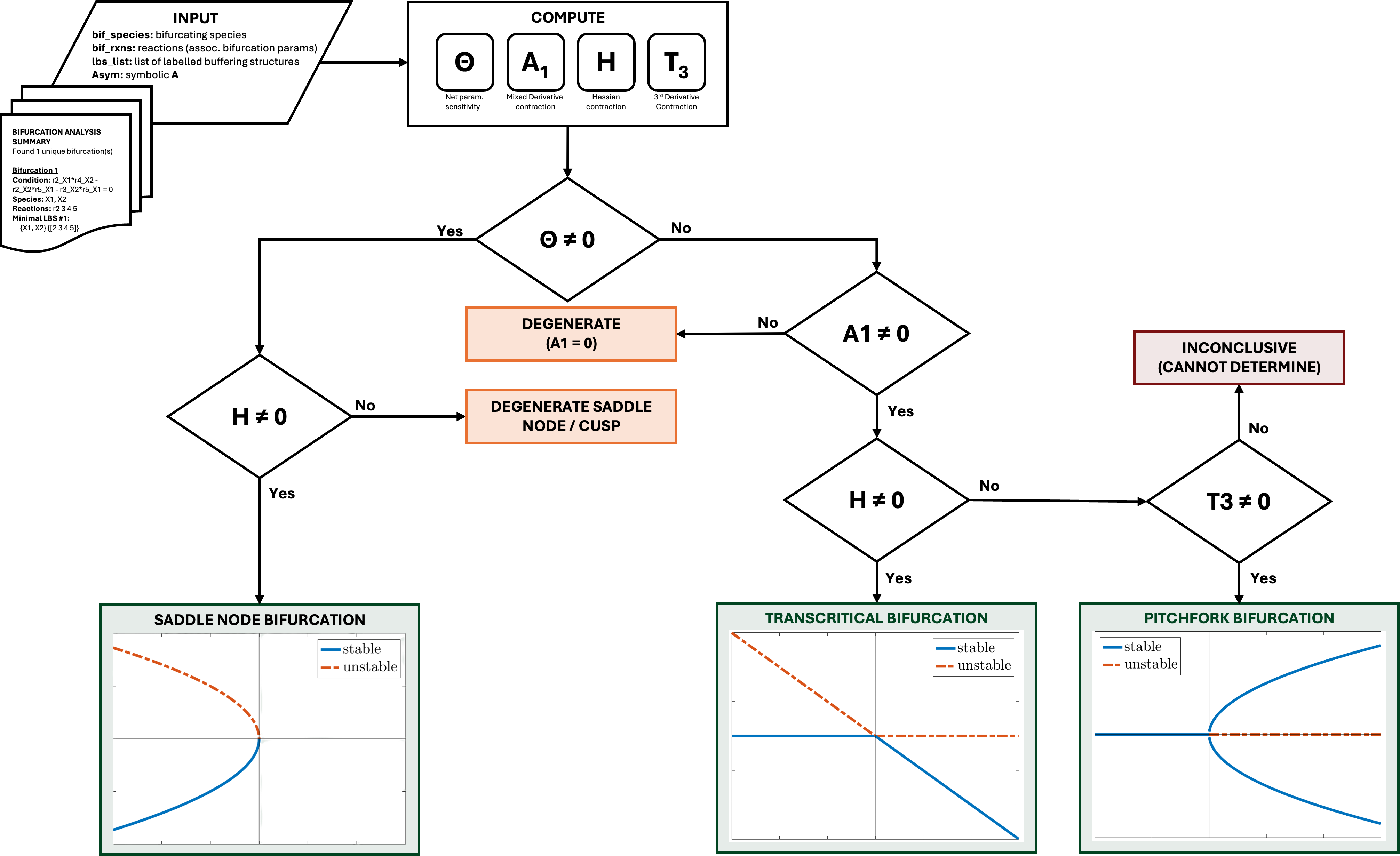}
    \caption[Decision tree for the structural classification of bifurcations.]{\textit{Decision tree for the structural classification of steady-state bifurcations in biochemical systems.} The method uses the bifurcating species and reactions, the list of labelled buffering structures, and the symbolic flux-augmented matrix to compute the net parameter sensitivity ($\Theta$), the mixed derivative contraction ($A_1$), the Hessian contraction ($H$), and the third derivative contraction ($T_3$) of the system. The successive structural non-vanishing of these quantities classify the steady-state bifurcation as saddle-node, transcritical, or pitchfork. Failure of relevant conditions implies a degenerate bifurcation (or cusp) or an inconclusive classification.}
    \label{fig:classify_flowchart}
\end{figure}

\begin{example}
    Recall the directed graph in \eqref{eq:network} induced by the reaction network of our running example. In Example \ref{ex:main-example}, we have shown that when the system is endowed with mass action kinetics, then it admits a unique positive steady state for each assignment of rate constants. This result was supported by Example \ref{ex:example-sba} where the determinant of the network's associated flux-augmented matrix $\AA$ was manually computed and shown to be nonvanishing for any positive rate constants. Now, we input the system's reaction network to {\NetBiF} to replicate these findings systematically. 
    
    First, {\NetBiF} constructs the flux-augmented matrix of the reaction network and identifies two labelled buffering structures: $$\Gamma_1 = (\{X_1\}, \{R_2, R_3\})\text{ and }\Gamma_2 = (\{X_1, X_2\}, \{R_2, R_3, R_4, R_5\}).$$ Then, the package computes for $\det\AA$ symbolically and finds that $\det \AA = r_{2,1} r_{4,2} r_{6,3}.$ Since $\det\AA\ne 0$ for the positive rate constants, the program outputs ``No bifurcation candidates found," concluding that the network's structure precludes steady-state bifurcations. We recreate the induced digraph of the reaction below along with the buffering structures identified by \NetBiF:
    $$\begin{tikzpicture}[baseline=(current  bounding  box.center), node distance = {15mm}]
            \node (0) {};
			\node (X1) [right of=0] {$X_1$};
			\node (X2) [right of=X1] {$X_2$};
			\node (X3) [right of=X2] {$X_3$};
            \node (1) [right of=X3] {};
            
            \draw [->] (0) -- (X1) node[midway,above]{$k_1$};
            
			\draw [-left to] ($(X1.east) + (0pt, 2pt)$) -- ($(X2.west) + (0pt, 2pt)$) node[midway,above]{$k_2$};
			\draw [-left to] ($(X2.west) + (0pt, -2pt)$) -- ($(X1.east) + (0pt, -2pt)$) node[midway,below]{$k_3$};

			\draw [-left to] ($(X2.east) + (0pt, 2pt)$) -- ($(X3.west) + (0pt, 2pt)$) node[midway,above]{$k_4$};
			\draw [-left to] ($(X3.west) + (0pt, -2pt)$) -- ($(X2.east) + (0pt, -2pt)$) node[midway,below]{$k_5$};

            \draw [->] (X3) -- (1) node[midway,above]{$k_6$};

\draw[green!70!black,dashed,very thick,rounded corners]
  ($(X1.north west)+(0.05, 0.5)$) rectangle ($(X3.south west)+(+0.05,-0.5)$);

\draw[orange!70!black,dashed,very thick,rounded corners]
  ($(X1.north west)+(0.07, 0.35)$) rectangle ($(X2.south west)+(+0.05,-0.35)$);
	\end{tikzpicture} \implies \text{no bifurcation candidates.}$$

Next, we modify the reaction network by adding two new regulatory reactions, an activation and an inhibition regulation, as we did in Example \ref{ex:network-modified}. We recall that a classical bifurcation analysis using the Jacobian could not be performed, and we resorted to numerical solutions to verify the existence of bifurcation behavior in the system (see Figure \ref{fig:numerical_sim}). From there, we found that as we varied the value of the parameter $k_5,$ the steady states of $X_1$ and $X_2$ exhibited steady-state bifurcation behaviors. In contrast, the steady state value of $X_3$ remained constant despite the variation of $k_5.$ We apply {\NetBiF} to this modified system to automate this result.

    In this modified network, {\NetBiF} predicts the single buffering structure given by $\Gamma = (\{X_1, X_2\}, \{R_2, R_3, R_4, R_5\}).$ Under SBA, bifurcation behavior is restricted to either $\Gamma$ or $\Gamma'=\NN\setminus\Gamma.$ Note that $\Gamma'$ is not a buffering structure, but it is a determinant structure. We visualize this partitioning below:
    $$\begin{tikzpicture}[baseline=(current  bounding  box.center), node distance = {15mm}]
            \node (0) {};
			\node (X1) [right of=0] {$X_1$};
			\node (X2) [right of=X1] {$X_2$};
			\node (X3) [right of=X2] {$X_3$};
            \node (1) [right of=X3] {};
            
            \draw [->] (0) -- (X1) node[midway,above]{$k_1$};
            
			\draw [-left to] ($(X1.east) + (0pt, 2pt)$) -- ($(X2.west) + (0pt, 2pt)$) node[midway,above]{$k_2$};
			\draw [-left to] ($(X2.west) + (0pt, -2pt)$) -- ($(X1.east) + (0pt, -2pt)$) node[midway,below]{$k_3$};

			\draw [-left to] ($(X2.east) + (0pt, 2pt)$) -- ($(X3.west) + (0pt, 2pt)$) node[midway,above]{$k_4$};
			\draw [-left to] ($(X3.west) + (0pt, -2pt)$) -- ($(X2.east) + (0pt, -2pt)$) node[midway,below]{$k_5$};

            \draw [->] (X3) -- (1) node[midway,above]{$k_6$};

            \draw[inhibit] (X2.north) 
    |- ($(X1)!0.5!(X2) + (0,30pt)$) 
    -- ($(X1)!0.5!(X2) + (0, 20pt)$)
    node[pos=0.25,left,red] {};

    \draw[activate] (X1.south) 
    |- ($(X2)!0.5!(X3) + (0,-30pt)$) 
    -- ($(X2)!0.5!(X3) + (0, -20pt)$)
    node[pos=1,left,red] {};

\draw[green!70!black,dashed,very thick,rounded corners]
  ($(X1.north west)+(0.05, 1)$) rectangle ($(X3.south west)+(-0.05,-1)$);
	\end{tikzpicture}$$ Remarkably, $X_3$ resides out of $\Gamma.$ By the Law of Localization (Theorem \ref{thm:localization}), perturbations of rate parameters inside $\Gamma$ will not affect the steady state of $X_3,$ explaining our earlier numerical observations involving $k_5.$ Now, {\NetBiF} solves the symbolic determinant of the flux-augmented matrix of the network given by\[\det \AA = \underbrace{r_{6,3}}_{\det  
    \AA_{\Gamma'}}\underbrace{(r_{2,1}r_{4,2} -r_{2,2}r_{5,1}-r_{3,2}r_{5,1})}_{\det \AA_{\Gamma}}.\] Note that $\det\AA_{\Gamma'} \ne 0$ so that $\det \AA = 0$ only when $\det \AA_\Gamma = 0.$ From this, {\NetBiF} concludes that $X_1$ and $X_2$ are bifurcation candidates whose bifurcation is dictated by the reactions $R_2, R_3, R_4$ and $R_5$ only. This behavior is restricted within the buffering structure $\Gamma.$

\end{example}
Results from these examples were obtained from {\NetBiF} in less than a second, avoiding parameter dependence and kinetic complexities. In the following subsections, we scale the implementation of {\NetBiF} to different biochemical systems showing its efficacy in biological applications.

\subsection{{\NetBiF} predicts saddle-node bifurcations across reactions in the glycolysis pathway}
The glycolysis pathway is a small, yet fundamental process that converts glucose to metabolic pyruvate through a series of biochemical reactions \citep{vanwijk2005energy}. This is part of a larger central carbon metabolic pathway that converts glucose into metabolic pyruvate that organisms use as a source of energy. The reaction network of the glycolysis pathway consists of twelve species and sixteen reactions (see Appendix \ref{appendix:rn-rep}). It is assumed that the system obeys mass action kinetics. In the same paper, three regulatory reactions were considered in the pathway, namely, the allosteric feedback activations by fructose-1,6-bisphosphate (F16BP) and fructose-2,6-bisphosphate (F26BP) and inhibition by phosphoenolpyruvate (PEP) \citep{mulukutla2014bistability}. Through SBA, the pathway has been shown to exhibit a saddle-node bifurcation when the rate parameter for the regulation of reaction $R_6$ by PEP is varied \cite{okada2021flux}.

We reproduce this result using {\NetBiF} and also show that when all other rate parameters inside the buffering structure are varied, the system indeed exhibits saddle-node bifurcation. In addition, we show that this saddle-node may be degenerate for some choices of the rate functions. Figure \ref{fig:glycolysis_network} shows the reaction network for the glycolysis pathway, along with the regulatory reactions and determinant structures of the system. 

\begin{figure}
    \centering
    \includegraphics[width=\linewidth]{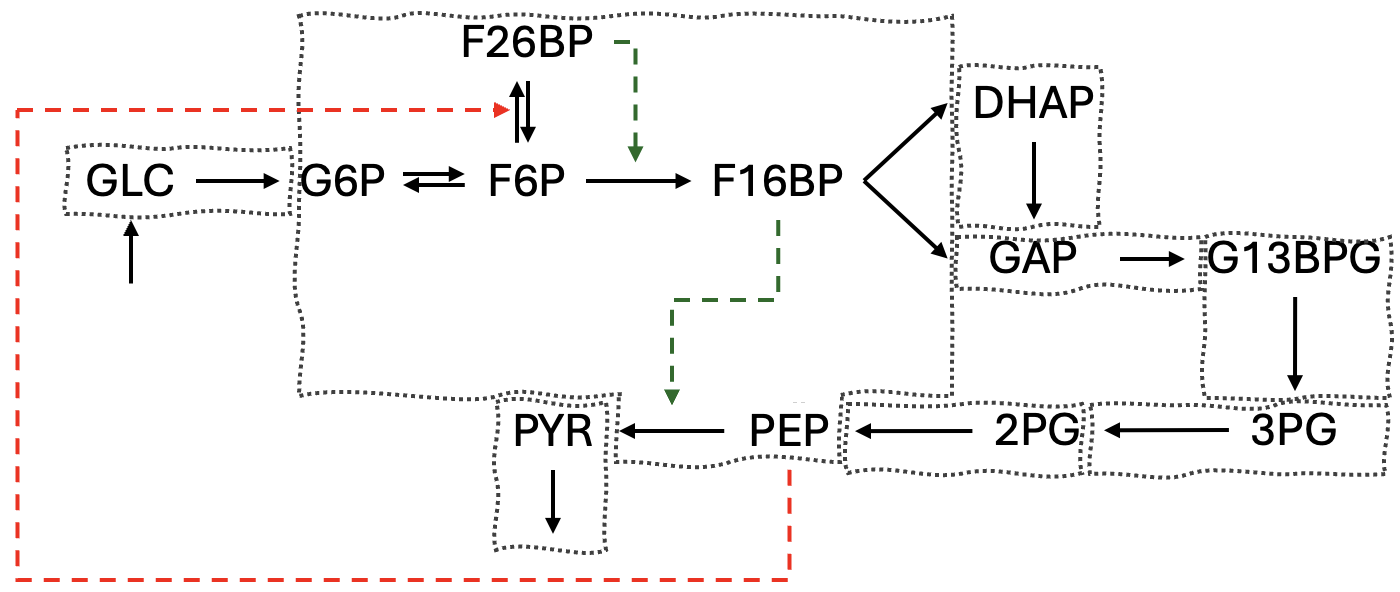}
    \caption{Network representation for the glycolysis pathway adopted from \cite{okada2021flux}. Green and red dashed arrows represent activation and inhibition regulations, respectively. Determinant structures are represented by dotted boxes.}
    \label{fig:glycolysis_network}
\end{figure}

We adopt the reaction network representation for the glycolysis pathway from \cite{okada2021flux}, where monotonic kinetics is assumed and the three allosteric regulatory reactions are considered. Using \NetBiF, we find the existence of eight determinant structures where the possibility for bifurcation may be determined. Indeed, following the algorithm, a sign change in the determinant of submatrix for $\AA$ corresponding to the largest determinant structure is observed. Specifically, this sign change is observed in the determinant structure given by $${\Gamma_8 = (\{F16BP, F26BP, F6P, G6P\}, \{R_3, R_4, R_5, R_6, R_7, R_8, R_{14}, R_{15}\}).}$$ This bifurcation behavior is found in the smallest buffering structure containing the determinant structure given by \[\begin{aligned}{\rm LBS~8} = \Gamma_8 \cup(&\{2PG, 3PG, DHAP, G13BPG, GAP, PYR\},\\
&\{R_9, R_{10}, R_{11}, R_{12}, R_{13}, R_{16}\}).\end{aligned}\] Note that ${\rm LBS~8}$ contains all species and reactions, except for glucose (GLC) and reactions $R_1$ and $R_2,$ implying that perturbations of rate parameters for reactions $R_3$ up to $R_{16}$ will not affect GLC.

Having determined the existence of bifurcation in the system within LBS 8, we classify the type of bifurcation exhibited by the system. We choose the bifurcation parameter to be the regulation of reaction $R_6$ by PEP ($k_6$). The package {\NetBiF} evaluates the net parameter sensitivity $\Theta_{R_6}$ given by \[\Theta_{R_6} = \frac{-r_{15,G6P}\cdot r_{4,F6P}\cdot r_{5,F26BP}\cdot F6P^*}{r_{3,G6P}\cdot PEP^*\cdot(r_{6,F6P}\cdot r_{5,F26BP} + r_{5,F6P}\cdot r_{7,F26BP})}\] where $F6P^*$ and $PEP^*$ denote the steady states of F6P and PEP, respectively. Since mass action kinetics is assumed so that $\partial r_n/\partial x_m > 0$ when $\nu_{m,n} < 0,$ the value of $\Theta$ is nonzero for any choice of the associated rate parameters. Then, it computes the value for (H) and identifies a sign change implying that it may be zero for some choice of rate parameters. However, this condition is parameter-rich and so the package concludes that the system exhibits a saddle-node bifurcation, which may be degenerate or a cusp when the choice of rate parameters satisfies the condition for (H) to equal zero.

Our findings support the result of \cite{okada2021flux} where they also uncovered a saddle-node bifurcation behavior across all species in $\Gamma_8$ when the rate parameter for $R_6$ is varied. Now, due to \NetBiF, we are able to do the same analysis for all of the rate parameters in the reaction set of $\Gamma_8.$ The symbolic expressions for $\Theta$ in $\Gamma_8,$ computed with respect to each reaction set as the bifurcation parameter, are all nonzero (see Appendix \ref{appendix:tables}, Table \ref{tab:theta_glycolysis}). In the context of classifying the type of bifurcation, this means that all perturbations in any of the reactions in $\Gamma_8$ are most likely to result in a saddle-node bifurcation at each of their respective bifurcation points. Still, this saddle-node bifurcation may be degenerate or result in a cusp, and an analysis of (H) for each reaction shows that this is a parameter-rich condition requiring specific and exact kinetic values.

The above results validate the findings of \cite{okada2021flux}, and extend its conclusion as we have shown that a saddle-node bifurcation is the most likely outcome when every rate parameter associated with reactions in $\Gamma_8$ is varied, and not just for $k_{6,{\rm PEP}}.$ This structural observation is consistent with the bistable switch-like behavior reported in \cite{mulukutla2014bistability}. Degeneracy of the saddle-node bifurcations for specific configurations of rate parameters is possible, which could warrant further numerical investigation. Biologically, this opens avenues for future research on this small yet fundamental pathway in energy conversion as the type of bifurcation that the system exhibits could inform how living organisms regulate switching of metabolic states given environmental and physiological perturbations.

\subsection{Structural (non-)bifurcation in the Wnt signaling pathway: Lee-Feinberg vs. Maclean model}

The Wnt signaling pathway plays a key role in the development of stem cells, and dysfunctions in the pathway have been shown to play a part in colon cancer emergence and development \citep{reya2005wnt,vermeulen2010cancer}. Moreover, bistable behavior has been observed both experimentally and theoretically for some models of the pathway \citep{cantoria2023feedback,lee2003roles,maclean2015parameter}. In this paper, we apply {\NetBiF} to investigate and compare bifurcation behaviors in two models of the Wnt signaling pathway, namely, the {Lee-Feinberg model} and the {Maclean model}. Each of the two models focuses on different aspects of the pathway. The Lee-Feinberg model focuses on the formation of the destruction complex (DC) from its constituent parts \citep{feinberg2019crnt,lee2003roles}. Meanwhile, the Maclean model studies the effects of controlling the shuttling and degradation parameters of the pathway \citep{maclean2015parameter}. We adopt the reaction network representations for the Lee-Feinberg model and Maclean et al.~model from \citep[Appendix B]{hernandez2025embedding} as input to our computational package.

First, we apply {\NetBiF} to the Lee-Feinberg model \citep{lee2003roles}, as presented in \citep[\S~10.5.4]{feinberg2019crnt} to show that its network structure excludes the possibility for bifurcation behavior. The reaction network consists of 14 species and 21 reactions (see Appendix \ref{appendix:rn-rep}). From \NetBiF, the system admits one buffering structure given by ${\rm LBS}~1 = (\{X_p\}, \{R_{14}\})$ and thus two determinant structures $\Gamma_1 = {\rm LBS}~1$ and $\Gamma_1' = \NN\setminus\Gamma_1.$ Evaluating the determinants of the flux-augmented matrix for each determinant structure, {\NetBiF} confirms that none vanish under monotonic kinetics. Indeed, we have $\det(\AA_{\Gamma_1}) = r_{14,X_p}$ and \begin{align*}\det(\AA_{\Gamma'}) = r_{18,Di}(r_{19,Da} + r_{20,Da_n})(&r_{21,P}r_{16,X_n}r_{12,CXY}r_{10,Ya_n}r_{7,T}r_{4,X}r_{1,Ya}r_{8,CXT}\\
&+ [\text{all positive terms}]).\end{align*} Since the determinant of the flux-augmented matrix for the whole network $\det\AA$ is the product of the determinants of $\AA$ for each determinant structure, there exists no values for the rate parameters for which $\det\AA = 0.$ As a result, the network structure fails to admit any candidates for steady-state bifurcations. Previous studies have shown that the Lee-Feinberg model, as represented by \cite{feinberg2019crnt}, is structurally monostationary \citep{lee2003roles}. Here, we demonstrate how {\NetBiF} supports this result by proving that the associated reaction network of the system fails to support the transitions necessary to induce multistationarity.

Now, we apply {\NetBiF} on the Maclean model, a different representation of the Wnt signaling pathway focusing on the degradation of $\beta$-catenin and shuttling of proteins between the nucleus and cytoplasm of a cell \citep{maclean2015parameter}. The original study, as well as new results from \cite{cantoria2023feedback}, has shown that perturbations on the rates of shuttling and degradation promotes bistability in the system. Here, we provide evidence of this bistability by applying structural bifurcation analysis via \NetBiF. The reaction network representation of the Maclean model from \cite{hernandez2025embedding} consists of 19 species and 31 reactions (see Appendix \ref{appendix:rn-rep}). We remark that reactions listed are not necessarily chemical reactions but rather, they can represent degradation and production reactions, or represent the shuttling dynamics of some species between the nucleus and cytoplasm. 

{\NetBiF} detects bifurcation in the system due to the presence of sign changes in the determinant of the flux-augmented matrix of the entire network. This detection was done for the entire network since {\NetBiF} finds no proper labelled buffering structure. The package tags the entire network as a trivial buffering structure. Biologically, this tells us that the entire shuttling/degradation network of Maclean acts as a singular coupled unit. This contrasts the Lee-Feinberg model where a buffering structure was found and could be decoupled from the system. This tells us that the existence (or non-existence) of buffering structures does not necessarily dictate the possibility (or non-possibility) of bifurcation behavior.

Now, we classify the type of bifurcation induced when varying the parameter responsible for the shuttling of $\beta$-catenin from the cytoplasm to the nucleus, i.e. $R_2\colon X\to X_n.$ Through \NetBiF, we see that the net parameter sensitivity is not sign definite so that it can vanish for permissible values of the rate parameters. This implies that the bifurcation point, for the bifurcation parameter $k_2,$ is a saddle-node unless
\[\begin{aligned}\Theta_{R_2} &= r_{4,X}r_{6,Xn}(r_{28,CYX} - r_{29, CYXn})
- r_{4,X}r_{31,Xn}(r_{29, CYXn}+ r_{7,CYXn})\\
&\phantom{=}+ r_{30,X}r_{6,Xn}(r_{28, CYX} + r_{5,CYX}) = 0.\end{aligned}\] Note that only the first term, $r_{4,X}r_{6,Xn}(r_{28, CYX} - r_{29, CYXn})$ is sign indefinite where the difference of $r_{28,CYX}$ and $r_{29, CYXn}$ represents the imbalance between the dissociation rates of Dsh ($Y$) and $\beta$-catenin in the nucleus ($X_n$) and cytoplasm ($X$). Indeed, this term is positive when the destruction-$\beta$-catenin complex in the cytoplasm dissociates much quicker than that in the nucleus. The two other terms in $\Theta$ are sign definite and represent the contributions of nuclear degradation and dissociation of the Dsh $\beta$-catenin complex ($R_{29}\colon CYX_n\to Y_{an}$, $R_7\colon CYX_n\to Y_{an} + X_n$) and cytoplasmic degradation and dissociation ($R_{28}\colon CYX\to Y_a$, $R_5\colon CYX\to Y_a + X$). The condition $\Theta = 0,$ where the bifurcation is not a saddle-node, therefore requires the precise balance of these three terms. In particular, if the cytoplasmic and nuclear dissociation of the Dsh $\beta$-catenin complex are the same ($k_{28, CYX} = k_{29, CYX_n}$), the bifurcation reduces to a balance of the remaining two terms: \[ r_{4,X}r_{31,Xn}(r_{29, CYXn}+ r_{7,CYXn}) = r_{30,X}r_{6,Xn}(r_{28, CYX} + r_{5,CYX}).\] Now, if $\Theta \ne 0$ so that we have a saddle-node, {\NetBiF} finds that the Hessian contraction at the bifurcation point is also sign indefinite. Therefore, the reaction network structure does not rule out degenerate saddle-nodes or cusp bifurcations in the system.

Our results tell us that the qualitative transitions of $\beta$-catenin levels through a saddle-node bifurcation is not necessarily a result of reaction network structure \citep{maclean2015parameter,cantoria2023feedback}. However, through the symbolic expressions generated by {\NetBiF} for the net parameter sensitivity, we have shown that perturbation in the shuttling, as well as degradation rates, of $\beta$-catenin is sufficient to induce bistable behavior. We remark that our analysis through {\NetBiF} only required information on network structure and the assumption of power law kinetics, without knowing the actual parameter values. Ultimately, {\NetBiF} effectively transforms a previously parameter-dependent analysis into an accessible and structure-based approach through SBA, which researchers can easily apply to more complex biochemical systems.

\subsection{Structural changes in eukaryotic heatshock response modifies bifurcation behavior}

The heat shock response of cultured cells is a universal feedback mechanism upon a cell's exposure to elevated temperatures. It acts by releasing a small number of heat shock proteins (hsp) which have been observed to provide protection to the cell, and by extension, the organism \citep{lindquist1988heat}. Understanding this stress response of eukaryotic cells is proving to be a worthwhile research endeavor as the heat shock response has also been implicated in the control of cell growth and activated even in the absence of direct physiological stress \citep{jolly2000role,sarge1993activation}. Although the heat shock response model exhibits highly nonlinear dynamics, the work of \cite{petre2011HSR} proposes a simple mass action model for the eukaryotic heat shock response. The same model was validated using existing literature and actual experiments. We add to the growing study of the heat shock response by looking at its capability for bifurcation behavior using the proposed model of \cite{petre2011HSR}. Our results can capture emergent behaviors such as bistability, hysteresis, and threshold-driven switch-like behavior in the system.

We implement SBA using {\NetBiF} on the mass action kinetics representations of the heat shock response model on eukaryotic cells from \cite{petre2011HSR}. Specifically, we implement the complete {\NetBiF} pipeline to the reduced, full, and extended models in order to show how structural modifications to a system's reaction network is sufficient to affect the existence of buffering structures, and consequently, its capacity to admit steady-state bifurcations. 

First, we look at the complete heat shock response model from \cite{petre2011HSR} to confirm the possibility of bifurcation using its structure. The reaction network consists of 10 species and 17 reactions (see Appendix \ref{appendix:rn-rep}). In this case, {\NetBiF} identifies no proper labelled buffering structure in the complete model and proceeds to compute for the determinant of the flux-augmented matrix of the entire reaction network. In this case, the package flags a sign change in the determinant of the flux-augmented matrix. Therefore, the system is capable of exhibiting bifurcation through any one of its parameter values. Next, we analyze the reduced model, which removes four reactions, namely, the reverse dissociation reactions of HSF dimers to monomers ($R_2\colon D\to 2F$), reverse dissociation of HSF trimers into dimers and monomers ($R_4\colon T\to F + D$), the reverse dissociation of HSF trimers ($R_6\colon TE\to T + E$), and the disassembly of HSF trimers as mediated by HSP ($R_{11}\colon P+T\to PF + 2F$). The resulting network still involves all species in the full network, but now consists of 13 reactions only. It was shown in \cite{petre2011HSR} that the removal of these reactions does not alter the fundamental dynamical characteristics of the system. This is supported by the fact that {\NetBiF} was able to find that its reaction network representation still does not admit any proper labelled buffering structure, and also exhibits sign changes in the determinant of the flux-augmented matrix. Therefore, similar to the full model, the reduced model of the eukaryotic heat shock response is structurally capable of exhibiting bifurcation behavior.

Finally, similar to the previous models, the extended heat shock response model exhibits bifurcation behavior, as structurally identified by \NetBiF. This reaction network representation of this model consists of 14 species and 25 reactions, incorporating misfolding and refolding of the heat shock protein and heat shock factors (see Appendix \ref{appendix:rn-rep}). In the network, this is represented by six additional reactions, acting as feedback loops in the system. In contrast to the previous models which does not admit any proper labelled buffering structures, the package {\NetBiF} identifies three determinant structures in the extended model from which bifurcation may arise, namely, $\Gamma_1\colon (\{Y\}, \{R_{20}, R_{21}\}), \Gamma_2\colon (\{PY\}, \{R_{18}, R_{24}\}),$ and $\Gamma_3\colon \NN\setminus (\Gamma_1\cup \Gamma_2).$ Moreover, the package tags $\Gamma_1$ and $\Gamma_1\cup \Gamma_2$ as proper labelled buffering structures, from which  perturbations of rate parameters within such structures remain inside them. Now, in the analysis of the signs of the determinant of submatrices of $\AA,$ the sign change occurs in $\Gamma_3$ only. Since no proper labelled buffering structure contains all of the species found in this determinant structure, the package concludes that bifurcation behavior is possible through perturbations of any of the rate parameters in the system. 

The results of the structural bifurcation analysis of the full, reduced, and extended models of the heat shock response show a consistent outcome that bifurcation is possible upon selection of appropriate kinetics and rate parameters. These conclusions were obtained using information on the structure of the systems' reaction networks. Now, the models of \cite{petre2011HSR} work under the assumption of mass action kinetics, which have been validated using existing literature and their own experimental evidence. Since the assumption is mass action kinetics, analysis of such models may be performed using existing modules such as the CRNToolbox (\citeyear{CRNToolbox}). Table \ref{tab:CRNToolboxHSR} shows some reports from the CRN Toolbox for the heat shock response. We remark that only the extended model is capable of exhibiting multistationarity under the assumption of mass action kinetics. Nonetheless, all three models exhibit network discordance so that there exists a weakly monotonic kinetics for which the resulting kinetic system is not injective. As a result, in such kinetics, the system is able to exhibit at least two positive steady states.

\begin{table*}[h]
    \centering
    \caption{Some CRN Toolbox reports for the full, reduced, and extended models of the eukaryotic heat shock response. For the multistationarity property, the toolbox assumes mass action as kinetics for each network.}
    \begin{tabular}{lccc}
        \hline Property & Reduced & Full & Extended \\\hline
        Deficiency zero & No & No & No \\
        Weak reversibility & Yes & Yes & Yes \\
        Concordance & No & No & No\\
        Injectivity & No & No & No\\
        Multistationarity & No & No & Yes\\\hline
    \end{tabular}\label{tab:CRNToolboxHSR}
\end{table*}

The complementary results of {\NetBiF} and the CRNToolbox show that the presented computational package herein should not be considered as replacements for existing tools. Rather, the package {\NetBiF} acts as a structural diagnostic screen which is able to make conclusions on the dynamics of the system but without any assumption on the kinetics endowed. Indeed, the structural analysis is able to anticipate the possibility of bifurcation, and thus possibly, multistationarity, which the CRN Toolbox recovers but with the additional assumption of mass action kinetics. The results presented for the eukaryotic heat shock response models underline the importance of performing diagnostic screens from the network structure prior to and independently of any specific kinetics.

\subsection{Structural bifurcation analysis of carbon cycle with direct ocean capture complements CRNT approaches to establish multistationarity}

We explore the bifurcation behavior in a carbon cycle model with direct ocean capture (DOC) technology. Conditions for multistationarity, which can signal crucial tipping points in the carbon cycle, have already been explored in a reaction network based analysis of the DOC model \citep{alamin2026DOC}. Moreover, in the same paper, conditions for the existence of positive steady states regardless of kinetic parameters have been established. Determining when a carbon cycle model exhibits bifurcation is essential as it implies the possibility for crucial tipping points. These irreversible and catastrophic changes in the carbon cycle are signaled by the existence of bistable branches in the system \citep{anderies2013topology}. Here, we analyze the bifurcation behavior of the system using its network structure and topology.

We use the reaction network representation for the DOC model used in \cite{alamin2026DOC}. The reaction network consists of five species and seven reactions. The species represent the major carbon pools in the system, namely, the land biota ({\rm Land}), ocean ({\rm Oce}), atmosphere ({\rm Atm}), long term storage ({\rm LTS}), and the DOC technology ({\rm DOC}), which captures carbon directly from the ocean. We consider the reaction network representation of the DOC system in \eqref{eq:DOC-system}. 
\begin{equation}
\label{eq:DOC-system}
    \begin{cases}
    \begin{aligned}
    &R_1\colon{\rm Land} + 2 \ {\rm Atm} \to 2 \ {\rm Land} + {\rm Atm}
    &&R_5\colon{\rm LTS}\to {\rm Atm}\\
    &R_2\colon 2 \ {\rm Land} + {\rm Atm}\to {\rm Land} + 2 \ {\rm Atm}
    &&R_6\colon{\rm DOC}\to {\rm LTS}\\
    &R_3\colon{\rm Atm} \to {\rm Oce}&&R_7\colon{\rm Oce}\to{\rm DOC}\\
    &R_4\colon{\rm Oce} \to {\rm Atm}
    \end{aligned}
\end{cases}
\end{equation}

In this case, {\NetBiF} finds that the network does not admit a proper labelled buffering structure and tags the entire model as a trivial buffering structure. The determinant of the flux-augmented matrix in its factored form is given by
\[\begin{aligned}
\det \AA &= r_{6,{\rm DOC}}\,r_{5,{\rm LTS}}\,(r_{4,{\rm Oce}} + r_{7,{\rm Oce}})\,(r_{1,{\rm Atm}} - r_{2,{\rm Atm}}) \\
&\phantom{=} - (r_{1,{\rm Land}} - r_{2,{\rm Land}})
\Bigl[
r_{6,{\rm DOC}}\,r_{5,{\rm LTS}}\,(r_{4,{\rm Oce}} + r_{7,{\rm Oce}} + r_{3,{\rm Atm}}) \\
&\phantom{= -(r_{1,{\rm Land}} - r_{2,{\rm Land}})\Bigl[}
  + r_{3,{\rm Atm}}\,r_{7,{\rm Oce}}\,(r_{6,{\rm DOC}} + r_{5,{\rm LTS}})
\Bigr].
\end{aligned}\] From here, we see that the capability of $\det\AA$ to vanish relies on the values of $p:= r_{1,{\rm Land}} - r_{2,{\rm Land}}$ and $q:= r_{1, {\rm Atm}} - r_{2,{\rm Atm}},$ i.e. the differences of the interactions of photosynthesis and respiration in land and atmosphere. 

It was shown in \cite{alamin2026DOC} that if both $p = 0$ and $q = 0,$ then the system can only exhibit positive steady states if the rate constants for reactions $R_1$ and $R_2$ coincide. This conclusion was obtained using the notion of $\widehat T$-independence of the system's network decomposition \citep{alamin2024positive}. Now, if we assume that $p = 0$ or $q = 0,$ but not both, then $\det\AA$ will never vanish for any value of the rate parameters. Therefore, no bifurcation behavior is possible if either $p = 0$ or $q = 0,$ but not both. This conclusion supports the findings of Alamin et al.~\cite[Table 3]{alamin2026DOC} where such systems (called $P$-null and $Q$-null) have been shown to be monostationary. Consequently, if neither $p$ nor $q$ equals zero, then the DOC system is capable of exhibiting bifurcation since $\det\AA$ can vanish for some specific choice of rate parameters. Again, the same properties were observed in the analysis of these classes of ``positive" and ``negative" DOC systems (see \cite{alamin2026DOC} for more details).

Now, under the assumption that $p\neq 0$ and $q\ne 0,$ we analyze the type of bifurcation that the system exhibits using the conditions in Table \ref{tab:okada} which uses information on network structure alone. We present in Appendix \ref{appendix:tables} Table \ref{tab:theta_DOC} the values of the net parameter sensitivity for each of the reactions.

We remark that the symbolic expressions for $\Theta$ in all reactions, except for $R_7$ in the DOC model are structurally nonzero. This means that perturbations in the rate parameters of these reactions will most likely result in a saddle-node bifurcation at the bifurcation point. Still, under a specific balance of rate parameters, the saddle-node bifurcation may be degenerate. Meanwhile, for perturbations in the rate parameters of reaction $R_7,$ the system most likely exhibits a saddle-node bifurcation except when \[r_{4,{\rm Oce}}(r_{6,{\rm DOC}} + r_{5,{\rm LTS}}) - r_{6,{\rm DOC}}r_{5,{\rm LTS}} = 0 \iff r_{4,{\rm Oce}} = \frac{r_{6,{\rm DOC}}r_{5,{\rm LTS}}}{r_{6,{\rm DOC}} + r_{5,{\rm LTS}}}.\] In this scenario, $r_{4,{\rm Oce}}$ is in perfect balance with a multiple of the harmonic mean of $r_{6,{\rm DOC}}$ and $r_{5,{\rm LTS}}$ at the bifurcation point. From a biological perspective, we have emphasized the possibilities for tipping points in the carbon cycle and especially in perturbations of the parameter in $R_7\colon{\rm Oce}\to {\rm DOC},$ these tipping points are governed by a precise balance of $r_{4,{\rm Oce}},$ representing removal of carbon from ocean towards atmosphere, $r_{5,{\rm LTS}},$ representing removal of carbon from long term storage, and $r_{6,{\rm DOC}},$ representing efficacy of DOC system to store captured carbon to long term storage. These findings demonstrate the capability of {\NetBiF} to extend beyond determination of bifurcation candidates, but the package also yields biologically interpretable conditions due to its symbolic calculation of relevant expressions. Taken together, we have shown how applying SBA in the DOC model through {\NetBiF} can inform engineering designs and limitations of carbon sequestration strategies.

\subsection{Reaction network structure of positive feedback systems induce transcritical bifurcations}

Positive feedback loops in gene regulation, where the protein promotes its own synthesis, frequently occur in different biological systems \citep{angeli2004detection,becskei2001positive,becskei2000engineering,ferrell2002self,harris2005p53,suel2006excitable,surget2013uncovering}. Here, we examine a simple single-loop autoregulation dynamical model of gene regulation whose biochemical reaction network involves four species and six reactions \citep{muller2009transcritical}. The structure of this model matches closely with prior studies involving a positive feedback loop leading to cellular differentiation \citep{becskei2001positive,becskei2000engineering,suel2006excitable}. We discuss this class of biological systems to show that although most steady state bifurcations in biochemical systems result in saddle-node bifurcations, {\NetBiF} is still able to recover transcritical or pitchfork bifurcations through the system's network structure.

We consider the reaction network representation of autoregulatory gene expression with positive feedback in \eqref{eq:positive-loop}.
\begin{equation}
    \label{eq:positive-loop}
    \begin{cases}
    \begin{aligned}
        &R_1\colon PMP\to PMP + mRNA
        &&R_2\colon mRNA\to \emptyset\\
        &R_3\colon mRNA\to mRNA + P
        &&R_4\colon P\to \emptyset\\
        &R_5\colon MP + P \to PMP
        &&R_6\colon PMP\to P + MP
        \end{aligned}
    \end{cases}.
\end{equation} 
In the model, it is assumed that there is a difference in time scales between the formation of the protein/mRNA polymerase complex (reactions $R_5$ and $R_6$) and the other processes. In \cite{muller2009transcritical}, they showed that the system exhibits a steady state bifurcation at the origin whenever the parameter describing the regulation strength of the autoregulatory loop exceeds some threshold value. Moreover, it was found that this bifurcation is a transcritical bifurcation wherein the origin becomes unstable and a second stable nonzero equilibrium emerges. Here, we recover the same result via {\NetBiF} using only information on the network structure.

{\NetBiF} finds no proper labelled buffering structure and computes the determinant of the flux-augmented matrix for the entire network. Bifurcation is structurally observed whenever \[
\begin{aligned}&r_{4,P}r_{5,MP}r_{2,mRNA} - r_{5,P}r_{1,PMP}r_{3,mRNA} + r_{4,P}r_{6,PMP}r_{2,mRNA} = 0\\
&\iff  r_{4,P}r_{2,mRNA}(r_{5,MP} + r_{6, PMP}) = r_{5,P}r_{1,PMP}r_{3,mRNA}.\end{aligned}\] We show that the origin exhibits a transcritical bifurcation whenever the rate parameter for reaction $R_6$ is perturbed near some threshold value. The net parameter sensitivity for reaction $R_6$ in the model is given by \[\Theta = \frac{(MP^*)(P^*)}{r_{5,MP}} = 0\] since the steady state is the origin. Since the reaction network is small, we can write the values corresponding to (A1), (H), and (T3) here. We have \[\text{(A1)} = \frac{1}{r_{5,MP}r_{6,PMP}},\quad\text{(H)} = \frac{-2(r_{5,MP} + r_{6,PMP})}{r_{5,P}r_{6,PMP}^2\cdot P^*},\quad\text{(T3)} = 0.\] Since (A1) $\ne 0$ and (H) $\ne 0$, the bifurcation classification criteria (see Figure \ref{fig:classify_flowchart}) provide that the system exhibits a transcritical bifurcation at the origin.

We remark that the code for {\NetBiF} reports a saddle-node bifurcation since its formulation assumes that the bifurcations analyzed are of interior steady states. This follows from the assumption of monotonic kinetics (e.g. power law kinetics) so that transcritical and pitchfork bifurcations are excluded for interior steady states. Transcritical bifurcations usually require a persistent zero-state equilibrium, as in our example \cite{strogatz2018nonlinear}. Nonetheless, the values computed and conclusions remain valid using the results of \cite{okada2021flux}. 

Unlike our previous examples which only show saddle-node bifurcations which may be degenerate or cusps, the discussion here shows that {\NetBiF} is able to recover other types of steady-state bifurcations when accompanied with manual analyses. Under the assumption of monotonic kinetics, we must enforce a persistent equilibrium at the origin to conclude, for example, a transcritical bifurcation.

\subsection{A remark on pitchfork bifurcations}

We close the results by discussing the case of pitchfork bifurcations. True pitchfork bifurcations are rare in biochemical systems as they require an exact symmetry in the rate parameters. This precise mathematical requirement is necessary but biologically fragile \citep{golubitsky1983discussion,widder2007dynamic}. To bypass these precise conditions, we examine an idealized two-gene cross-regulation network with cooperative repression \citep{widder2007dynamic}. Such systems have particular applications in modelling genetic switches.

The reaction network representation of the model consists of eight species and 12 reactions given in \eqref{eq:widder-model}. The first two reactions correspond to the cooperative binding/unbinding of proteins ($P$), genes ($G$), and gene-protein complexes ($H$). The remaining reactions correspond to the transcription, translation, and degradation of genes and proteins.

\begin{equation}
    \label{eq:widder-model}
    \begin{cases}
        \begin{aligned}
        &R_1\colon G_1 + 2P_2 \to H_1 \qquad && R_7\colon Q_1\to Q_1+P_1\\
        &R_2\colon H_1\to G_1 + 2P_2 && R_8\colon Q_2\to Q_2+P_2\\
        &R_3\colon G_2 + 2P_1\to H_2 && R_9\colon Q_1\to \emptyset\\
        &R_4\colon H_2\to G_2 + 2P_1 && R_{10}\colon Q_2\to \emptyset\\
        &R_5\colon G_1\to G_1 + Q_1 && R_{11}\colon P_1\to\emptyset\\
        &R_6\colon G_2\to G_2 + Q_2 && R_{12}\colon P_2\to\emptyset\\
        \end{aligned}
    \end{cases}.
\end{equation}

Using \NetBiF, we find that there are no properly labeled buffering structures, and the determinant of the flux-augmented matrix is computed as a whole for the network. Still, {\NetBiF} discovers a unique bifurcation condition through sign changes in the symbolic expression of the determinant of the flux-augmented matrix.

We classify the bifurcation in the system when the rate parameters for $R_5$ and $R_6$ are perturbed. From \cite[see Table 3 and Figure 10]{widder2007dynamic} , the bifurcation is pitchfork whenever there is exact symmetry between the values of the parameters. In the case of our application, we specify the reactions containing the bifurcation parameter to both be $R_5$ and $R_6.$ {\NetBiF} giving the net parameter sensitivity of the model to be
\[\Theta = G_2^*\frac{r_{3,G_2} + r_{4,H_2}}{r_{4,H_2}r_{6,G_2}} - G_1^*\frac{r_{3,P_1}r_{7,Q_1}}{r_{11,P_1}r_{4,H_{2}}r_{9,Q_1}}.\] Note that $\Theta = 0$ for some appropriate choice of the rate parameters. Meanwhile, the symbolic expression corresponding to (A1) is given by \[\text{(A1)} = \frac{-r_{1,P_2}r_{3,P_1}r_{7,Q_1}(r_{5,G_1} + r_{6,G_2})}{r_{11,P_1}r_{12,P_2}r_{4,H_2}r_{6,G_2}r_{9,Q_1}(r_{1,G_1} + r_{2,H_1})},\] which is nonzero for any permissible choice of rate parameters. Through \NetBiF, the symbolic expressions for the Hessian contraction (H) can equal zero for some choice of rate parameters, while that of the third derivative contraction (T3) is always nonzero. Using the decision tree for the classification of the bifurcation (see Figure \ref{fig:classify_flowchart}), the package concludes that the system is structurally capable of exhibiting a pitchfork bifurcation given the choice of perturbed rate parameters. 

In any case, we remark that our presented example is for demonstration purposes only to show that network structure does not preclude the possibility for a pitchfork bifurcation. In most cases, the observed bifurcation remains a saddle-node as the exact symmetry of the rate parameters is rarely achieved \citep{widder2007dynamic}. Our application here demonstrates how {\NetBiF} is able to also capture pitchfork bifurcations, alongside the usual saddle-node and transcritical bifurcations, underscoring its utility in analyzing steady state bifurcations of biochemical systems.

\section{Summary and Outlook}\label{sec:summary}

The long-term dynamical behavior of biochemical systems is governed by the structure of their underlying reaction networks. Classical techniques to analyze bifurcation behavior rely heavily on explicit rate parameters, which are often unavailable for large experimental systems. We address this gap by implementing the method of Structural Bifurcation Analysis (SBA) through a computational package, {\NetBiF} (\textbf{Net}work \textbf{Bi}furcation \textbf{F}inder), which leverages the concepts of the flux-augmented matrix, buffering structures, and the law of localization.

In this study, we demonstrated the efficacy and biological relevance of {\NetBiF} as a computational tool across different biochemical systems. Specifically, bistable switch-like behaviors in the glycolysis pathway were verified by {\NetBiF} by recovering the bistable dynamics in the glycolysis pathway. Notably, through the computational efficiency of {\NetBiF}, the structural persistence of saddle-node bifurcations across all relevant reactions was predicted, generalizing the previous result analyzing only one regulatory parameter. For the Wnt signaling pathway, {\NetBiF} was utilized to structurally preclude bifurcation in the Lee-Feinberg model and explore the bistable potential of the Maclean shuttling model. We have also shown how structural modifications in models of eukaryotic heat shock response dictate the existence of buffering structures and subsequent bifurcation behaviors. Our analysis also extended to a large-scale carbon cycle model with direct ocean capture, providing parameter-independent conditions for saddle-node bifurcations. Beyond identifying these bifurcation behaviors, we have established that {\NetBiF} can effectively classify some qualitative changes into saddle-node, transcritical, or pitchfork bifurcations automating structural conditions derived from Sotomayor's theorem for systems with power law kinetics.

The application of {\NetBiF} to these systems complements existing methodologies, such as CRNToolbox, COMPILES, and CRITERIA. Nonetheless, the current framework still has limitations that outline some avenues for future research. First, future iterations of the software may be expanded to detect emergence of limit cycles through Hopf bifurcation by considering more structure-based correspondence of the Jacobian and the flux-augmented matrix. Second, while the determination of bifurcation candidates in {\NetBiF} is largely independent of kinetics, the classification is specialized for systems with power law kinetics only. Extending the classification module to handle broader classes of monotonic kinetics could widen the applicability of the tool. Finally, as the size of biochemical systems inherently grows, optimizing the symbolic computation of the flux-augmented matrix benefits improvement of the computational efficiency.

The advancement of theory and methods in structural bifurcation analysis supports the paradigm shift of studying dynamical behaviors from a parameter-centric approach to a structure-based one. The computational package \NetBiF, through its implementation and application, serves as an important step in this direction to allow researchers to retrieve biological insights solely from the structure of the reaction network of biochemical systems.

\section*{Appendix}
\appendix
\section{Chemical Reaction Network Theory}\label{appendix:crnt}
We present the basic definitions and notions of CRNT here. 

\begin{definition}
    A \emph{reaction network} is a triple $\NN = (\SS, \CC, \RR)$ of nonempty finite sets, namely,
    \begin{enumerate}
        \item a set of \emph{species} in $\SS = \{x_1, x_2, \ldots, {x_M}\},$
        \item a set of \emph{complexes} in $\CC = \{y_1, y_2,\ldots, {y_{|\CC|}}\}\subseteq{\rm span}\ \SS$ where each $y\in\CC$ is a nonnegative linear combination of species, and
        \item {a set} of \emph{reactions} in $\RR = \{r_1, r_2, \ldots, {r_N}\}\subseteq\CC\times \CC$ where each $r = (y,y')\in\RR$ is seen as a directed edge from the \emph{reactant complex} $y$ to a \emph{product complex} $y'.$ We write a reaction $(y,y')$ as $y\to y'.$
    \end{enumerate}
\end{definition}

In addition, we also assume that no complex in a reaction network reacts to itself (i.e. $y \neq y'$ for each $y\to y'\in\RR$). Nonetheless, each complex must participate in a reaction as either a reactant or a product. That is, we have for every $y\in \CC,$ there is a $y'\in \CC$ such that either $y\to y'\in\RR$ or $y'\to y\in\RR.$

Each complex $y\in\CC$ is written as {\(y = a_1x_1 + a_2x_2+\cdots + a_M x_M = \sum_{i=1}^Ma_ix_i,\) where each of the nonnegative coefficients $a_i$'s in the linear combination is called the \emph{stoichiometric coefficient} of species $x_i$ in $y.$} If needed, we use $\emptyset\in\CC$ to denote the \emph{zero complex,} defined as the complex where all stoichiometric coefficients equal zero (i.e. $\emptyset := \sum_{i=1}^{M} 0x_i.$) A reaction involving the zero complex, for example $\emptyset\to y$ or $y\to\emptyset$ should not be interpreted as a spontaneous formation (or destruction) of $y$ from (or to) nothing, but rather that $y$ is produced from or degrades into an external reservoir.

The \emph{reaction vector} for a reaction $y\to y'$ is defined by the difference $y' - y$. These reaction vectors induce a linear subspace $S$ called the \emph{stoichiometric subspace} given by $S = \mathrm{span}\{y'-y \in \mathbb{R}^{{M}} \mid y \rightarrow y' \in \mathcal{R}\}.$ The \emph{stoichiometric matrix} $\nu$ of a network is the ${M\times N}$ matrix whose columns correspond to the coefficients of the associated species (called \emph{stoichiometric coefficient}) for each reaction vector. Note that, by construction, the dimension of the stoichiometric subspace of a network is the same as the rank of its associated stoichiometric matrix.

The stoichiometry of a network describes the relationship among the different species taking part in the different reactions. However, this information is not enough to determine the long term behavior and describe the dynamics of these species. For this, we endow our network with a rate function which describes the formation and evolution of its species over time.

\begin{definition}
    \label{def:kinetics}
    A \emph{kinetics} $\mathcal K$ of a reaction network $\NN = (\SS, \CC, \RR)$ is an assignment of differentiable \emph{rate functions} $r_{y\to y'}\colon {\mathbb R_{>0}^{M}\times \mathbb R^N}\to \mathbb{R}_{{\ge 0}}$ to each reaction $y\to y'\in\RR$ such that for {each} $\tilde y\in \mathbb R_{>0}^M,$ we have \[r_{y\to y'}(\tilde y; k) > 0 \text{ if and only if } {\rm supp}\ y \subset {\rm supp}\ \tilde y,\] where {${\rm supp}\ y$} denotes the support of the vector $y,$ i.e.~the set of species with nonzero coefficients in $y$ and $\mathbf k = [k_1\ \ k_2\ \ \ldots\ \ k_r]\in {\mathbb R^N_{>0}}$ denotes the \emph{{rate} parameters} of our reactions.
\end{definition}

For clarity, the relation ${\rm supp}\ y\subset {\rm supp} \ \tilde y$ means that every species present in the complex $y$ must also be present in the complex $\tilde y.$ Now, we define a \emph{kinetic system,} denoted $(\NN, \mathcal K)$ as a reaction network $\NN$ endowed with some kinetics $\mathcal K.$ A kinetic system defines a dynamical system of ordinary differential equations, called the \emph{species formation rate function,} describing the evolution of each species in a network over time. Hence, we get
\begin{equation}
\label{eq:ode-system}
\frac{\dd x_i}{\dd t} = \sum_{n=1}^N \nu_{i, n}r_n( x; \mathbf k) =: f_i( x; \mathbf k)\qquad  i = 1,\ldots, M,\end{equation} where $x_i$ is the concentration of species $X_i$ in the system. The points $x^*\in \mathbb R^M$ satisfying $f(x^*;\mathbf k) = 0,$ as defined in  \eqref{eq:ode-system}, are called the \emph{steady states} (or \emph{equilibria}) of the associated kinetic system. Specifically, if each element in the steady state $x^*$ is positive so that $x^*\in\mathbb R^M_{>0}$ and $f(x^*;\mathbf k) = 0,$ then $x^*$ is said to be a positive steady state of the kinetic system.

As an example, a kinetic system is said to obey the \textit{power law} if for each reaction $i\in\RR,$ there exists a positive number $k_i$ such that for any $x\in\mathbb R_{>0}^M,$ we have \[r_i(x) = k_i\prod_{j=1}^M x_j^{F_{ij}} := k_ix^{F_i}\] where $F=[F_{ij}]\in\mathbb R^{N\times M}$ is the kinetic order matrix containing the kinetic order values of each reaction. Moreover, if the column vectors of the kinetic order matrix contain the stoichiometric coefficients of the species in the reactant complex, then the system is said to obey the familiar \textit{law of mass action.}

\section{Proof of Theorem 2}\label{appendix:proof-thm-2}

\begin{theorem}
    Let $\Gamma$ be the minimal buffering structure containing the bifurcation condition at steady state $x^*$ and bifurcation parameter $\overline{\mathbf k}$, and let $\AA_{\Gamma}$ be the restriction of the flux-augmented matrix to $\Gamma.$ Assume $A_{\Gamma}$ has a simple zero eigenvalue, with left and right eigenvectors $W$ and $U = [u^\top, v^\top],$ respectively.

    If the system is endowed with power-law kinetics such that the rate function for each reaction $R_i\in\RR$ is $r_i(x) = k_i\prod_{j=1}^M x_j^{F_{ij}}$ with $F_{ij}$ being the kinetic order value of species $X_j$ in reaction $R_i,$ then the bifurcation classification quantities at steady state $x = x^*$ take the following forms:
    \begin{description}
    \item[(1) Net Sensitivity ($\Theta$):] 
    \[ \Theta = W^\top \frac{\partial r}{\partial k_n}{\bigg |}_{x = x^*} = W_n\prod_{\ell = 1}^M (x_\ell^*)^{F_{n\ell}} \] Note that for positive steady states, $\Theta = 0$ if and only if $W_n =0 .$    
    \item[(2) Parameter Sensitivity Gradient (A1):] 
    \[ W_n\left[\nabla_x\left(\frac{\partial r_n}{\partial k_n}\right)\cdot u\right] = W_n\frac{\partial r_n}{\partial k_n}{\bigg |}_{x=x^*}\cdot F_n^\top \tilde u,\qquad \tilde u = (u_i/x_i^*)_{i=1}^M\]
    
    \item[(3) Hessian Contraction (H):]
    \[ W^\top\nabla^2_x r(x^*;\mathbf{\overline k})(u,u) = \sum_{i} W_i r_i(x^*)\left((F_i^\top\cdot \tilde u)^2 - \sum_jF_{ij}\tilde u_j^2\right) \]
    
    \item[(4) Third-Order Contraction (T3):]
    \[ W^\top \nabla_x^3 r(x^*;\overline{\mathbf k})(u,u,u) = \sum_{i} W_i r_i(x^*)\left[(F_i^\top\cdot \tilde u)^3 - 3(F_i^\top\cdot \tilde u)\sum_jF_{ij}\tilde u_j^2 + 2\sum_jF_{ij}\tilde u_j^3\right] \]
\end{description}
\end{theorem}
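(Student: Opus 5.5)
The plan is a direct computation. Everything reduces to derivatives of a single monomial $r_i(x)=k_i x^{F_i}$ on the positive orthant, contracted against $u$ and then summed with the weights $W_i$. The buffering-structure hypothesis only ensures that $W$ and $U$ are well-defined eigenvectors of $\AA_\Gamma$. Entries of $W$ outside $\Gamma$ are taken to be zero, so every sum over $i$ can be read as a sum over all reactions. The first step is logarithmic differentiation:
\[\frac{\partial r_i}{\partial x_j}=r_i\frac{F_{ij}}{x_j},\]
valid for $x\in\mathbb R^M_{>0}$. This immediately gives $\partial r/\partial k_n=e_n\, x^{F_n}$, because only $r_n$ depends on $k_n$. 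Hence $W^\top \partial r/\partial k_n = W_n\prod_\ell (x_\ell^*)^{F_{n\ell}}$. At a positive steady state the monomial is strictly positive, so $\Theta=0$ if and only if $W_n=0$. For (A1), apply the same formula to $\partial r_n/\partial k_n = x^{F_n}$. This gives
\[\nabla_x(x^{F_n})\cdot u=x^{F_n}\sum_j F_{nj}u_j/x_j = \frac{\partial r_n}{\partial k_n}\,F_n^\top\tilde u,\]
and multiplying by $W_n$ yields item (2).

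Next, compute the higher derivatives of a monomial by repeatedly using the product rule on $r_i F_{ij}/x_j$. Differentiating once more gives
\[\frac{\partial^2 r_i}{\partial x_j\partial x_{j'}}=r_i\frac{F_{ij}F_{ij'}-\delta_{jj'}F_{ij}}{x_jx_{j'}}.\]
The third derivative is
\[\frac{\partial^3 r_i}{\partial x_j\partial x_{j'}\partial x_{j''}}=\frac{r_i}{x_jx_{j'}x_{j''}}\Big(F_{ij}F_{ij'}F_{ij''}-\delta_{jj'}F_{ij}F_{ij''}-\delta_{jj''}F_{ij}F_{ij'}-\delta_{j'j''}F_{ij}F_{ij'}+2\delta_{jj'}\delta_{j'j''}F_{ij}\Big).\]
The cleanest way to verify this is through the falling-factorial identity $\partial_j^3 x_j^{a}=a(a-1)(a-2)x_j^{a-3}$ on the diagonal, together with mixed terms that factor.

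Then contract with $u$ in every slot and substitute $\tilde u_j=u_j/x_j$. In the Hessian, the term $F_{ij}F_{ij'}$ sums to $(F_i^\top\tilde u)^2$ and the $\delta$ term gives $\sum_j F_{ij}\tilde u_j^2$. In the third-order contraction, the product term gives $(F_i^\top\tilde u)^3$. Each of the three single-$\delta$ terms gives $(F_i^\top\tilde u)\sum_j F_{ij}\tilde u_j^2$, and the double-$\delta$ term gives $2\sum_j F_{ij}\tilde u_j^3$. Multiplying by $W_i$ and summing over $i$ gives items (3) and (4).

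The only delicate point is the bookkeeping of the Kronecker-delta terms in the third derivative, in particular the coefficient $2$ and the sign pattern. I would check these with a one-variable sanity case: $x^a$ with $\tilde u=u/x$ should give $a(a-1)(a-2)\tilde u^3 x^a$. Indeed $a^3-3a^2+2a=a(a-1)(a-2)$, which confirms the coefficients. The analogous check for the Hessian is $a^2-a=a(a-1)$.
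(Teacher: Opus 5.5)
Your proposal is correct and follows essentially the same route as the paper: a direct computation of the first three derivatives of each monomial $r_i = k_i x^{F_i}$ on the positive orthant, contracted against $u$ through $\tilde u_j = u_j/x_j^*$ and weighted by $W_i$. The only difference is bookkeeping. You encode index coincidences with Kronecker deltas, so that $(F_i^\top\tilde u)^3 - 3(F_i^\top\tilde u)\sum_j F_{ij}\tilde u_j^2 + 2\sum_j F_{ij}\tilde u_j^3$ falls out term by term, whereas the paper splits into all-distinct, two-equal, and all-equal index cases and then cancels against the multinomial expansion of $(F_i^\top\tilde u)^3$.
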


\begin{proof} The first quantity, denoted $\Theta$ is the net of the system projected onto the left eigenvector of $\AA_\Gamma$ at the steady state $x^*:$ \[\Theta = W^\top \frac{\partial r}{\partial k_n}{\bigg |}_{x = x^*}.\] Under power law kinetics, the rate function for each reaction $R_n\in\RR$ is $r_n(x) = k_n\prod_{\ell=1}^M x_\ell^{F_{n\ell}}$ where $F_{n\ell}$ is the kinetic order value of species $X_\ell$ in reaction $R_n.$ Moreover, since only reaction $R_n$ depends on rate parameter $k_n,$ only the $n$th coordinate of $\Theta$ is nonzero so that $$\Theta = W_n\underbrace{\prod_{\ell = 1}^M(x_{\ell})^{F_{n\ell}}}_{\partial r/\partial k_n}$$. Note that $\partial r/\partial k_n > 0$ whenever $x^*$ is a positive steady state. This proves the statement that for positive steady states, we have $\Theta = 0$ if and only if $W_n = 0.$ As such, we exclude the possibility for transcritical and pitchfork bifurcations of positive steady states, and consider these bifurcations only when it occurs at the boundary or the origin. For positive steady states, the classification is heavily reliant on higher-order derivatives.

Next, the algorithm computes the quantity (A1). This quantity measures how the parameter sensitivity $\partial r/\partial k_n$ changes along the direction of $u.$ For positive steady states, the value of (A1) is irrelevant to our analysis. In contrast, if $\Theta = 0$ so that the bifurcation occurs at the boundary or the origin, the value of (A1) determines if the system is structurally capable of exhibiting a steady-state bifurcation. 

For the calculation of (A1) under power law kinetics, we have \[\frac{\partial r_n}{\partial k_n} = \prod_{\ell=1}^M x_\ell^{F_{n\ell}}\implies \frac{\partial^2r_n}{\partial k_n\partial x_i}{\bigg |}_{x=x^*} = \frac{\partial^2 r_n}{\partial x_i\partial k_n}{\bigg |}_{x=x^*} = \frac{F_{ni}}{x_i^*}\underbrace{\prod_{\ell=1}^M(x_\ell^*)^{F_{n\ell}}}_{{\partial r_n}/{\partial k_n}}.\] Then, the gradient with respect to $x$ becomes the accumulation of each of the mixed partial derivatives against each reaction. Hence, for the quantity (A1), we have \[W_n\left[\nabla_x\left(\frac{\partial r}{\partial k_n}\right)\cdot u\right] = W_n\frac{\partial r}{\partial k_n}{\bigg |}_{x=x^*}\cdot F_n^\top \tilde u,\] where $\tilde u = u_i/(x_i)^*.$  To further distinguish the steady-state bifurcation types, we compute the contractions of the second-order (Hessian, H) and third-order (T3) derivatives with respect to $u.$ 

For the contraction of the Hessian derivative with respect to $u,$ note that at steady state $x = x^*,$ we have \[\frac{\partial r_i}{\partial x_j}{\bigg |}_{x=x^*} = \frac{F_{ij}}{x_j^*}\cdot \underbrace{k_i\prod_{\ell=1}^M (x_\ell^*)^{F_{i\ell}}}_{r_i(x^*)}.\] Hence \[r_i(x^*) = \frac{\partial r_i}{\partial x_j}{\bigg |}_{x=x^*}\cdot \frac{x_j^*}{F_{ij}}.\] We rewrite our rate function in this manner so that we can obtain this information from the upper left hand submatrix of $\AA.$ Now, we have for the Hessian, if $j \ne k,$ then \[\frac{\partial^2 r_i}{\partial x_k\partial x_j}{\bigg |}_{x=x^*} = \frac{\partial}{\partial x_k}\left(\frac{F_{ij}}{x_j^*}\cdot k_i\prod_{\ell=1}^Mx_\ell^{F_{i\ell}}\right){\bigg |}_{x=x^*} = \frac{F_{ij}F_{ik}}{x_j^*x_k^*}\cdot\underbrace{k_i\prod_{\ell=1}^M (x_\ell^*)^{F_{i\ell}}}_{r_i(x^*)}.\] In contrast, if $j = k,$ then we have \[\frac{\partial^2 r_i}{\partial x_j^2} = \frac{F_{ij}(F_{ij} - 1)}{(x_j^*)^2}\cdot\underbrace{k_i\prod_{\ell=1}^m (x_i^*)^{F_{i\ell}}}_{r_i(x^*)}.\] In total, the Hessian contraction along $u$ is given by 
\begin{align*}W^\top\nabla^2_x r(x^*;\mathbf{\overline k})(u,u) &=W^\top\left(\sum_{j, k}\frac{\partial^2 r_i(x^*;\mathbf{\overline k})}{\partial x_k\partial x_j} u_j u_k\right)\\
&= W^\top\left(r_i(x^*)\left[\sum_{j\ne k}F_{ij}F_{ik}\tilde u_j\tilde u_k + \sum_{j=k}F_{ij}(F_{ij} - 1)\tilde u_j^2\right]\right)
\end{align*}where $\tilde u_j = u_j/x_j^*.$ We can simplify the summation since
\begin{align*}
    \sum_{j\ne k}F_{ij}F_{ik}\tilde u_j\tilde u_k + \sum_{j=k}F_{ij}(F_{ij} - 1)\tilde u_j^2 &=\left(\sum_{j\neq k}F_{ij}F_{ik}\tilde u_j\tilde u_k + \sum_{j=k}F_{ij}^2\tilde u_j^2-\sum_j F_{ij}\tilde u_j^2\right)\\
    &=\sum_j F_{ij}F_{ik}\tilde u_j\tilde u_k - \sum_j F_{ij}\tilde u_j^2\\
    &= (F_i^\top\cdot \tilde u)^2 - \sum_j F_{ij}\tilde u_j^2.
\end{align*} Therefore, the value corresponding to (H) may be evaluated cleanly as \[W^\top\nabla^2_x r(x^*;\mathbf{\overline k})(u,u) = W^\top\left[r_i(x^*)\left((F_i^\top\cdot \tilde u)^2 - \sum_jF_{ij}\tilde u_j^2\right)\right].\] 

Finally, for the contraction of the third-order derivative with respect to $u,$ by the form of the rate function in systems with power law kinetics, we have the following third order partial derivatives:
\begin{enumerate}
    \item if all indices are distinct so that $j\neq k\neq l,$ then \[\frac{\partial^3 r_i}{\partial x_j\partial x_k\partial x_l}{\bigg |}_{x=x^*} = \frac{F_{ij}F_{ik}F_{il}}{x^*_jx^*_kx^*_l}\cdot \underbrace{k_i\prod_{\ell = 1}^M(x_\ell^*)^{F_{i\ell}}}_{r_i(x^*)}.\]
    \item if exactly two are equal so that without loss of generality we can have $j=k\ne l,$ then we have
    \[\frac{\partial^3 r_i}{\partial x_j^2\partial x_l}{\bigg |}_{x=x^*} = \frac{F_{ij}(F_{ij} - 1)F_{il}}{(x_j^*)^2x^*_l}\cdot r_i(x^*),\] 
    \item if all indices are equal so that $j = k = l,$ then we have \[\frac{\partial^3 r_i}{\partial x_j^3}{\bigg |}_{x=x^*} = \frac{F_{ij}(F_{ij}-1)(F_{ij}-2)}{(x_j^*)^3}\cdot r_i(x^*).\]
\end{enumerate} When we sum up all these partial derivatives across all ordered triples counting multiplicities and contract along $u,$ we get \[\nabla_x^3r_i(x^*;\mathbf{\overline{k}})(u,u,u) = r_i(x^*)\left(\begin{aligned}\sum_{i,j,k~\text{distinct}}F_{ij}F_{ik}F_{il}&\tilde u_j\tilde u_k\tilde u_l + 3\sum_{j\ne l}F_{ij}(F_{ij} - 1)F_{il}\tilde u_j^2\tilde u_l\\
&+ \sum_{j}F_{ij}(F_{ij}-1)(F_{ij}-2)\tilde u_j^3\end{aligned}\right).\]

Now, define the following quantities \[A_i=F_i^\top\cdot \tilde u = \sum_jF_{ij}\tilde u_j,\quad B_i = \sum_jF_{ij}\tilde u_j^2,\text{ and}\quad C_i = \sum_jF_{ij}\tilde u_j^3,\] so we only need to show that \[\nabla_x^3r_i(x^*;\mathbf{\overline{k}})(u,u,u) = r_i(x^*)[A_i^3-3A_iB_i+2C_i].\] We have \[A_i^3 = \left(\sum_j F_{ij}\tilde u_j\right)^3 = \sum_{i,j,k~\text{distinct}}F_{ij}F_{ik}F_{il}\tilde u_j\tilde u_k\tilde u_l + 3\sum_{j\ne l}F_{ij}^2F_{il}\tilde u_j^2\tilde u_l+ \sum_jF_{ij}^3\tilde u_j^3.\] Thus, we have \[\sum_{i,j,k~\text{distinct}}F_{ij}F_{ik}F_{il}\tilde u_j\tilde u_k\tilde u_l = A_i^3 - 3\sum_{j\ne l}F_{ij}^2F_{il}\tilde u_j^2\tilde u_l - \sum_jF_{ij}^3\tilde u_j^3.\] Notice that \[\sum_{j\ne l}F_{ij}(F_{ij}-1)F_{il}\tilde u_j^2\tilde u_l =\sum_{j\ne l}F_{ij}^2F_{il}\tilde u_j^2\tilde u_l - \sum_{j\ne l}F_{ij}F_{il}\tilde u_j^2\tilde u_l,\] and \[\sum_{j}F_{ij}(F_{ij}-1)(F_{ij}-2)\tilde u_j^3 = \sum_j(F_{ij}^3-3F_{ij}^2+ 2F_{ij})\tilde u_j^3 = \sum_jF_{ij}^3\tilde u_j^3 - 3\sum_jF_{ij}^2\tilde u_j^3 + 2\sum_jF_{ij}\tilde u_j^3.\] In total, we have 
\[\nabla_x^3r_i(u,u,u) = r_i(x^*)\left(\begin{aligned}
    A_i^3 &- \cancel{3\sum_{j\ne l}F_{ij}^2F_{il}\tilde u_j^2\tilde u_l} - \cancel{\sum_jF_{ij}^3\tilde u_j^3} \\&+3\left[\cancel{\sum_{j\ne l}F_{ij}^2F_{il}\tilde u_j^2\tilde u_l} - \sum_{j\ne l}F_{ij}F_{il}\tilde u_j^2\tilde u_l\right]\\
    &\phantom{-\cancel{3\sum_{j\ne l}F_{ij}^2F_{il}\tilde u_j^2\tilde u_l}}+\cancel{\sum_jF_{ij}^3\tilde u_j^3} - 3\sum_jF_{ij}^2\tilde u_j^3 + 2\underbrace{\sum_j F_{ij}\tilde u_j^3}_{C_i}
\end{aligned}\right)\] 
which simplifies to \[\nabla_x^3r_i(u,u,u) = r_i(x^*)\left(A_i^3-3\left[\sum_{j\ne l}F_{ij}F_{il}\tilde u_j^2\tilde u_l+\sum_jF_{ij}^2\tilde u_j^3\right] + 2C_i\right).\] The quantity inside the brackets simplify to \[\begin{aligned}\sum_{j\ne l}F_{ij}F_{il}\tilde u_j^2\tilde u_l+\sum_jF_{ij}^2\tilde u_j^3 &= \sum_{j,l}F_{ij}F_{il}\tilde u_j^2\tilde u_l\\
&= \left(\sum_l F_{il}\tilde u_l\right)\left(\sum_jF_{ij}\tilde u_j^2\right) = A_iB_i,\end{aligned}\] therefore giving our desired result. Similar to the Hessian contraction, we use this expression for the implementation of the third derivative contraction with respect to $u$ along the components of $W.$

\end{proof}

\section{Reaction Network Representations of Some Biochemical Systems}\label{appendix:rn-rep}
We present here the reaction network representations of the glycolysis pathway and the Lee-Feinberg and Maclean models of the Wnt signaling pathways.
\subsection{Glycolysis Pathway}
\begin{equation}
\label{eq:glycolysis-system}
    \begin{cases}
    \begin{aligned}
    &R_1\colon \emptyset \to {\rm GLC}
    &&R_9\colon {\rm DHAP} \to {\rm GAP}\\
    &R_2\colon {\rm GLC} \to {\rm G6P}
    &&R_{10}\colon {\rm GAP} \to {\rm G13BPG}\\
    &R_3\colon {\rm G6P} \to {\rm F6P}
    &&R_{11}\colon {\rm G13BPG} \to {\rm 3PG}\\
    &R_4\colon {\rm F6P} \to {\rm G6P}
    &&R_{12}\colon {\rm 3PG} \to {\rm 2PG}\\
    &R_5\colon {\rm F6P} \to {\rm F16BP}
    &&R_{13}\colon {\rm 2PG} \to {\rm PEP}\\
    &R_6\colon {\rm F6P} \to {\rm F26BP}
    &&R_{14}\colon {\rm PEP} \to {\rm PYR}\\
    &R_7\colon {\rm F26BP} \to {\rm F6P}
    &&R_{15}\colon {\rm G6P} \to \emptyset\\
    &R_8\colon {\rm F16BP} \to {\rm DHAP} + {\rm GAP}
    &&R_{16}\colon {\rm PYR} \to \emptyset
    \end{aligned}
    \end{cases}
\end{equation}

\subsection{Wnt Signaling Pathway: Lee-Feinberg Model}
\begin{equation}
\label{eq:leefeinberg-expanded-system}
    \begin{cases}
    \begin{aligned}
    &R_1\colon {\rm Ya} \to {\rm Yi}
    &&R_{12}\colon {\rm CXY} \to {\rm CXYn}\\
    &R_2\colon {\rm Yi} \to {\rm Ya}
    &&R_{13}\colon {\rm CXYn} \to {\rm Yan} + {\rm Xp}\\
    &R_3\colon {\rm Yan} \to {\rm X}
    &&R_{14}\colon {\rm Xp} \to \emptyset\\
    &R_4\colon {\rm X} \to {\rm Yan}
    &&R_{15}\colon \emptyset \to {\rm Xn}\\
    &R_5\colon {\rm Yi} + {\rm X} \to {\rm P}
    &&R_{16}\colon {\rm Xn} \to \emptyset\\
    &R_6\colon {\rm X} \to {\rm T}
    &&R_{17}\colon \emptyset \to {\rm Di}\\
    &R_7\colon {\rm T} \to {\rm X}
    &&R_{18}\colon {\rm Di} \to \emptyset\\
    &R_8\colon {\rm CXT} + {\rm Di} \to {\rm T}
    &&R_{19}\colon {\rm Xn} + {\rm Da} \to {\rm Dan}\\
    &R_9\colon {\rm T} \to {\rm CXT} + {\rm Di}
    &&R_{20}\colon {\rm Dan} \to {\rm Xn} + {\rm Da}\\
    &R_{10}\colon {\rm Yan} + {\rm Xn} \to {\rm CXY}
    &&R_{21}\colon {\rm P} \to {\rm Yi} + {\rm T}\\
    &R_{11}\colon {\rm CXY} \to {\rm Yan} + {\rm Xn}
    \end{aligned}
    \end{cases}
\end{equation}

\subsection{Wnt Signaling Pathway: Maclean Model}
\begin{equation}
\label{eq:maclean-expanded-system}
    \begin{cases}
    \begin{aligned}
    &R_1\colon \emptyset \to {\rm X}
    &&R_{17}\colon {\rm CYDn} \to {\rm Yan} + {\rm Dan}\\
    &R_2\colon {\rm X} \to {\rm Xn}
    &&R_{18}\colon {\rm CYDn} \to {\rm Dan} + {\rm Yin}\\
    &R_3\colon {\rm Xn} \to {\rm X}
    &&R_{19}\colon {\rm Yin} + {\rm Pn} \to {\rm CYPn}\\
    &R_4\colon {\rm Ya} + {\rm X} \to {\rm CYX}
    &&R_{20}\colon {\rm CYPn} \to {\rm Yin} + {\rm Pn}\\
    &R_5\colon {\rm CYX} \to {\rm Ya} + {\rm X}
    &&R_{21}\colon {\rm CYPn} \to {\rm Yan} + {\rm Pn}\\
    &R_6\colon {\rm Yan} + {\rm Xn} \to {\rm CYXn}
    &&R_{22}\colon {\rm Da} + {\rm Ya} \to {\rm CYD}\\
    &R_7\colon {\rm CYXn} \to {\rm Yan} + {\rm Xn}
    &&R_{23}\colon {\rm CYD} \to {\rm Da} + {\rm Ya}\\
    &R_8\colon {\rm T} + {\rm Xn} \to {\rm CXT}
    &&R_{24}\colon {\rm CYD} \to {\rm Da} + {\rm Yi}\\
    &R_9\colon {\rm CXT} \to {\rm T} + {\rm Xn}
    &&R_{25}\colon {\rm Yi} + {\rm P} \to {\rm CYP}\\
    &R_{10}\colon {\rm Di} \to {\rm Da}
    &&R_{26}\colon {\rm CYP} \to {\rm Yi} + {\rm P}\\
    &R_{11}\colon {\rm Da} \to {\rm Di}
    &&R_{27}\colon {\rm CYP} \to {\rm Ya} + {\rm P}\\
    &R_{12}\colon {\rm Da} \to {\rm Dan}
    &&R_{28}\colon {\rm CYX} \to {\rm Ya}\\
    &R_{13}\colon {\rm Dan} \to {\rm Da}
    &&R_{29}\colon {\rm CYXn} \to {\rm Yan}\\
    &R_{14}\colon {\rm Yi} \to {\rm Yin}
    &&R_{30}\colon {\rm X} \to \emptyset\\
    &R_{15}\colon {\rm Yin} \to {\rm Yi}
    &&R_{31}\colon {\rm Xn} \to \emptyset\\
    &R_{16}\colon {\rm Yan} + {\rm Dan} \to {\rm CYDn}
    \end{aligned}
    \end{cases}
\end{equation}

\subsection{Eukaryotic Heatshock Response: Full Model}
\begin{equation}
\label{eq:eukaryotic-hsr-full-system}
    \begin{cases}
    \begin{aligned}
    &R_1\colon 2{\rm F} \to {\rm D}
    &&R_{10}\colon {\rm P} + {\rm D} \to {\rm PF} + {\rm F}\\
    &R_2\colon {\rm D} \to 2{\rm F}
    &&R_{11}\colon {\rm P} + {\rm T} \to {\rm PF} + 2{\rm F}\\
    &R_3\colon {\rm F} + {\rm D} \to {\rm T}
    &&R_{12}\colon {\rm P} + {\rm TE} \to {\rm PF} + {\rm E} + 2{\rm F}\\
    &R_4\colon {\rm T} \to {\rm F} + {\rm D}
    &&R_{13}\colon {\rm P} \to \emptyset\\
    &R_5\colon {\rm T} + {\rm E} \to {\rm TE}
    &&R_{14}\colon {\rm R} \to {\rm M}\\
    &R_6\colon {\rm TE} \to {\rm T} + {\rm E}
    &&R_{15}\colon {\rm P} + {\rm M} \to {\rm PM}\\
    &R_7\colon {\rm TE} \to {\rm TE} + {\rm P}
    &&R_{16}\colon {\rm PM} \to {\rm P} + {\rm M}\\
    &R_8\colon {\rm P} + {\rm F} \to {\rm PF}
    &&R_{17}\colon {\rm PM} \to {\rm P} + {\rm R}\\
    &R_9\colon {\rm PF} \to {\rm P} + {\rm F}
    \end{aligned}
    \end{cases}
\end{equation}

\subsection{Eukaryotic Heatshock Response: Extended Model}
\begin{equation}
\label{eq:eukaryotic-hsr-extended-subset}
    \begin{cases}
    \text{(Reactions } R_1 \text{ through } R_{17} \text{ are identical to the full model)}\\
    R_{18}\colon {\rm P} \to {\rm Y}\\
    R_{19}\colon {\rm F} \to {\rm X}\\
    R_{20}\colon {\rm P} + {\rm Y} \to {\rm PY}\\
    R_{21}\colon {\rm PY} \to {\rm P} + {\rm Y}\\
    R_{22}\colon {\rm P} + {\rm X} \to {\rm PX}\\
    R_{23}\colon {\rm PX} \to {\rm P} + {\rm X}\\
    R_{24}\colon {\rm PY} \to 2{\rm P}\\
    R_{25}\colon {\rm PX} \to {\rm P} + {\rm F}
    \end{cases}
\end{equation}

\pagebreak
\section{Supplementary Tables}\label{appendix:tables}
\begin{table*}[ht]
    \caption{Symbolic expressions of the net parameter sensitivity term $\Theta$ when the parameters for the reactions in $\Gamma_8$ of the glycolysis pathway are varied. All are structurally nonzero, implying saddle-node bifurcations (may be degenerate or cusp).}
    \label{tab:theta_glycolysis}
    \centering
    \(\displaystyle 
    \renewcommand{\arraystretch}{1.9}\begin{array}{cc}
        \hline\text{Reaction \#}& \Theta \\\hline
        3 & G6P^* \dfrac{-r_{15,G6P}}{r_{3,G6P}}\\
        4 & F6P^* \dfrac{-r_{15,G6P}}{r_{3,G6P}}\\
        5 & F6P^* F26BP^*\dfrac{-r_{15,G6P} r_{4,F6P} r_{7, F26BP}}{r_{3,G6P}(r_{6,F6P} r_{5,F26BP} + r_{5,F6P} r_{7, F26BP}}\\
        6 & F6P^*\dfrac{-r_{15,G6P} r_{4,F6P} r_{5,F26BP}}{r_{3,G6P} PEP^*(r_{6,F6P} r_{5,F26BP} + r_{5,F6P} r_{7,F26BP})}\\
        7 & F26BP^*\dfrac{r_{15,G6P} r_{4,F6P} r_{5,F26BP}}{r_{3,G6P}(r_{6,F6P}r_{5,F26BP} + r_{5,F6P}r_{7,F26BP}}\\
        8 & F16BP^*\dfrac{(r_{14,PEP} r_{15,G6P} r_{6,F6P} r_{5,F26BP} + r_{14,PEP} r_{15,G6P} r_{4,F6P} r_{7,F26BP} + \ldots)}{r_{14,PEP} r_{3,G6P} (r_{6,F6P} r_{5,F26BP} + r_{5,F6P} r_{7,F26BP})}\\
        14 & F16BP^* PEP^*\dfrac{-r_{15,G6P} r_{4,F6P} r_{6,PEP} r_{5,F26BP}}{r_{14,PEP} r_{3,G6P}(r_{6,F6P} r_{5,F26BP} + r_{5,F6P} r_{7,F26BP})}\\
        15 & G6P^*\\\hline
    \end{array}\)
\end{table*}
\begin{table*}[ht]
    \centering
    \caption{Symbolic expressions of the transversality term $\Theta$ for all reactions in the DOC model. Under the assumption that $r_{1,{\rm Land}} - r_{2,{\rm Land}} \ne 0,$ all reactions except for $R_7\colon {\rm Oce}\to {\rm DOC}$ are structurally incapable of vanishing implying saddle-node bifurcations.}
    \( 
    \renewcommand{\arraystretch}{1.9}\begin{array}{cc}
        \hline\text{Reaction \#}& \Theta \\\hline
        1 & ({\rm Atm}^*)^2{\rm (Land}^*) \dfrac{1}{r_{1,{\rm Land}} - r_{2, {\rm Land}}}\\
        2 & -({\rm Atm}^*)^2{\rm (Land}^*) \dfrac{1}{r_{1,{\rm Land}} - r_{2, {\rm Land}}}\\
        3 & -{\rm Atm}^* \dfrac{(r_{6, {\rm DOC}}r_{5, {\rm LTS}} + r_{6, {\rm DOC}}r_{7, {\rm Oce}} + r_{5, {\rm LTS}}r_{7, {\rm Oce}})}{r_{6, {\rm DOC}}r_{5, {\rm LTS}}(r_{4, {\rm Oce}} + r_{7, {\rm Oce}})}\\
        4 & {\rm Oce}^* \dfrac{r_{6, {\rm DOC}}r_{5, {\rm LTS}} + r_{6, {\rm DOC}}r_{7, {\rm Oce}} + r_{5, {\rm LTS}}r_{7, {\rm Oce}}}{r_{6, {\rm DOC}}r_{5, {\rm LTS}}(r_{4, {\rm Oce}} + r_{7, {\rm Oce}})}\\
        5 & {\rm LTS}^*/{r_{5, {\rm LTS}}}\\
        6 & -1/{r_{6, {\rm DOC}}^2}\\
        7 & -({\rm Oce}^*) \dfrac{r_{6, {\rm DOC}}r_{4, {\rm Oce}} - r_{6, {\rm DOC}}r_{5, {\rm LTS}} + r_{5, {\rm LTS}}r_{4, {\rm Oce}}}{r_{6, {\rm DOC}}r_{5, {\rm LTS}}(r_{4, {\rm Oce}} + r_{7, {\rm Oce}})}\\
        \hline
    \end{array}\)
    
    \label{tab:theta_DOC}
\end{table*}

\bibliography{references}
\end{document}